\DeclareDocumentCommand \addpic{O{0.4\textwidth} m g}{\parpic[r]{%
\begin{minipage}{#1}
    \includegraphics[width=\textwidth]{#2}%
    \IfNoValueTF{#3}{}{\captionof{figure}{\footnotesize #3}}
\end{minipage}
}}
\newtheorem{theo}{Theorem}[section]
\newtheorem{exam}[theo]{Example}
\newtheorem{lem} [theo]{Lemma}
\theoremstyle{remark}
\newtheorem{algo}[theo]{Algorithm}
\def\SW{\texttt{SW}}
\def \TAU {{\cal T}}
\def \CD {{\cal D}}
\def \MK {{\mathbf{k}}}
\def \RA {\!\!\rightarrow\!\!}
\def\om {\omega}
\def\oD{\overline{D}}
\begin{document}
\vskip -.5in

\title{On the Sweep Map for $\vec{k}$-Dyck Paths}

\author{Guoce Xin$^1$ and Yingrui Zhang$^2$}

\address{ $^{1,2}$School of Mathematical Sciences, Capital Normal University,
 Beijing 100048, PR China}
\email{$^1$\texttt{guoce.xin@gmail.com}\ \  \& $^2$\texttt{zyrzuhe@126.com}}

\date{November 15, 2018} 

\begin{abstract}
Garsia and Xin gave a linear algorithm for inverting the sweep map for Fuss rational Dyck paths in $D_{m,n}$ where $m=kn\pm 1$. They introduced an intermediate
family $\mathcal{T}_n^k$ of certain standard Young tableau. Then inverting the sweep map is done by a simple walking algorithm on a $T\in \mathcal{T}_n^k$. We find their idea
naturally extends for $\mathbf{k}^\pm$-Dyck paths, and also for $\mathbf{k}$-Dyck paths (reducing to $k$-Dyck paths for the equal parameter case). The intermediate object becomes a similar type of tableau in $\mathcal{T}_\mathbf{k}$ of different column lengths.
This approach is independent of the Thomas-Williams algorithm for inverting the general modular sweep map.

\bigskip
\noindent
\begin{small}
 \emph{Mathematic subject classification}: Primary 05A19; Secondary 05E99.
\end{small}

\medskip
\noindent
\begin{small}
\emph{Keywords:} Rational Dyck paths, Sweep map, Young tableaux, $\mathbf{k}^\pm$-Dyck paths.
\end{small}


\end{abstract}

\maketitle

\section{Introduction\label{s-introduction}}

\subsection{The Sweep map}
The sweep map is a mysterious simple sorting algorithm that is also invertible.
The best way to introduce the sweep map is by using rational Dyck paths,
because it already raises complicated enough problems and it has natural generalizations.
We will use three models, each having their own advantages.

Model 1: Classical path model. For a
pair of positive integers $(m,n)$, the so called \emph{rational $(m,n)$-Dyck paths} are paths proceed by North and East unit steps from $(0,0)$ to
$(m,n)$ remaining always  above the main diagonal (of slope $n/m$). A $(kn,n)$-Dyck path is also called a $k$-Dyck path of length $n$.
Each vertex is assigned a rank as follows. We start by assigning $0$ to $(0,0)$. This done we add an $m$ as we go  North and subtract an $n$ as we go East. Figure \ref{fig:Dyck-Path-Example} illustrates an example of an $(12,4)$-Dyck path, or a $3$-Dyck path of length $4$.

\begin{figure}[!ht]
  $$\hskip 1.2in \oD= \hskip -1.4in\vcenter{ \includegraphics[height= 1.2 in]{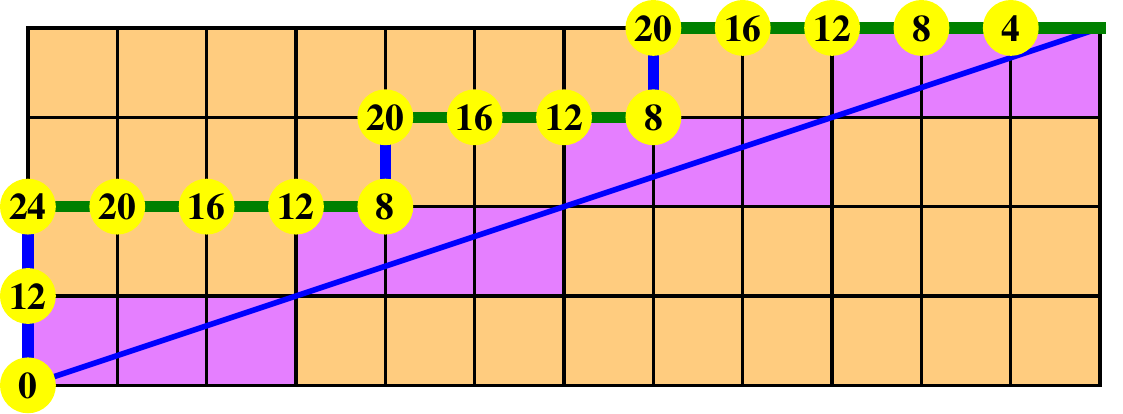}}
  $$
  \caption{An example of $(12,4)$-Dyck path.\label{fig:Dyck-Path-Example}}
\end{figure}
\begin{figure}[!ht]
$$
\hskip 1.15in D= \hskip -1.4in \vcenter{ \includegraphics[height=1.2 in]{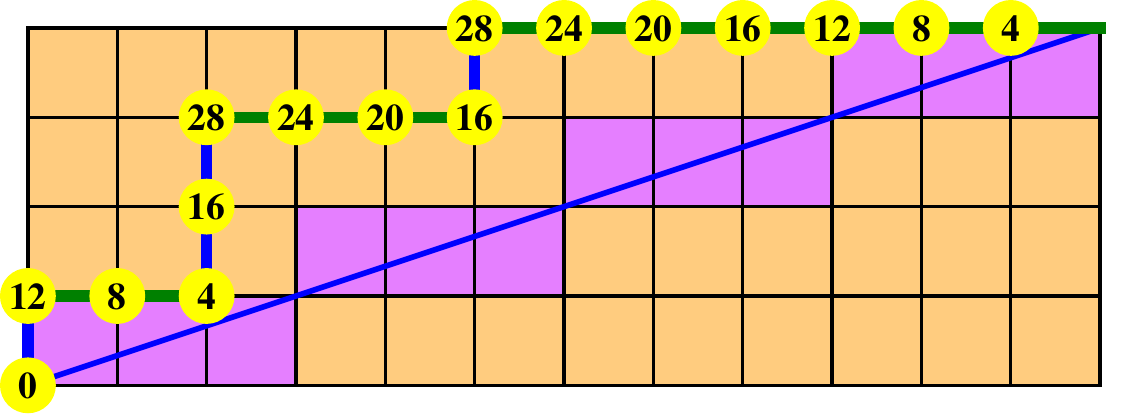}}
$$
\caption{ The sweep map image of the Dyck path $\oD$ in Figure \ref{fig:Dyck-Path-Example}.\label{fig:sweep-image}}
\end{figure}
The \emph{sweep} map is a bijection of the family
  ${\cal D}_{m,n}$ of $(m,n)$-Dyck paths onto itself. 
The construction of the sweep map is deceptively simple. Geometrically in model 1,
we sweep a path $\oD\in \CD_{m,n}$ by letting lines of slope $n/m+ \epsilon $, where $\epsilon>0$ is sufficiently small,  move from right to left,
and draw a North step when  we sweep the South end of a North step of $\oD$ and draw an East step when we sweep the West end of an East step of $\oD$.
The resulting path will be denoted by $D=\Phi(\oD)$ and can be shown to be in $\CD_{m,n}$. For instance, Figure \ref{fig:sweep-image} illustrates
the sweep map image of the Dyck path $\oD$ in our running example.

Model 2: Word model. The SW-word $\SW(\oD)=\sigma_1\sigma_2\cdots \sigma_{m+n}$ is a natural encoding of $\oD$, where $\sigma_i$ is either an $S$ or a $W$ depending on whether the $i$-th vertex of $\oD$ is a South end
(of a North step) or a West end (of an East step).  The rank is then associated to each letter of $\SW(\oD)$ by assigning  $r_1=0$ to the first letter $\sigma_1=S$ and for $1\le i\le m+n-1$, recursively
assigning $r_{i+1}$ to be either $r_i+m$ if the $i$-th letter $\sigma_i=S$, or $r_i-n$ if otherwise $\sigma_i=W$. We can then form the two line array $\left( \SW(\oD) \atop r(\oD)\right)$.
For instance for the path $\oD$ in Figure \ref{fig:Dyck-Path-Example} this gives
\begin{align}\label{e-II.1}
  \left(\SW(\oD) \atop r(\oD)\right)=
 \left (\begin{array}
   {cccccccccccccccc}
   S \!\!\! & S \!\!\! & W \!\!\! & W \!\!\! & W \!\!\! & W \!\!\! & S \!\!\! & W \!\!\! & W \!\!\! & W \!\!\! & S \!\!\! & W \!\!\! & W \!\!\! & W \!\!\! & W \!\!\! & W \\
   0 \!\!\! &  12 \!\!\! &  24 \!\!\! &  20 \!\!\! &  16 \!\!\! &  12 \!\!\! &  8 \!\!\! &  20 \!\!\! &  16 \!\!\! &  12 \!\!\! &  8 \!\!\! &  20 \!\!\! &  16 \!\!\! &  12 \!\!\! &  8 \!\!\! &  4
 \end{array} \right).
\end{align}
Now sort the columns of $(\ref{e-II.1})$ according to the second row, and let the right one comes first for equal ranks.
Then the top row is just the SW-word $\SW(D)$ of the sweep map image of $\oD$.
Our running example gives
\begin{align}
\left(\SW(D) \atop r(\overline{D})_{\texttt{sorted}}\right)= \left( \begin{array}{cccccccccccccccc}
S \!\!\! & W \!\!\! & W \!\!\! & S \!\!\! & S \!\!\! & W \!\!\! & W \!\!\! & W \!\!\! & S \!\!\! & W \!\!\! & W \!\!\! & W \!\!\! & W \!\!\! & W \!\!\! & W \!\!\! & W \\
0 \!\!\! & 4 \!\!\! & 8 \!\!\! & 8 \!\!\! & 8 \!\!\! & 12 \!\!\! & 12 \!\!\! & 12 \!\!\! & 12 \!\!\! & 16 \!\!\! & 16 \!\!\! & 16 \!\!\! & 20 \!\!\! & 20 \!\!\! & 20 \!\!\! & 24
\end{array} \right).
\end{align}

Model 3: Visual path model. We can rotate and stretch the picture in model $1$ so that the diagonal line becomes the horizontal axis. Then rational $(m,n)$-Dyck paths
are paths from $(0,0)$ to $(m+n,0)$, with up steps $S=(1,m)$ and down steps $W=(1,-n)$, that stay above the horizontal axis. It is clear that the ranks
are just the levels (or $y$-coordinates). The visualization of the ranks in this model allows us to have better understanding of many results. See \cite{dinv-area,Rational-Invert}.
See \cite{dinv-def,qt-Catalan,Gorsky-Mazin,Hag-book08,Loehr-higher-qtCatalan} for further references and related results. 

The invertibility problem is to reconstruct $\oD$ from the sole knowledge of
$\SW(D)$.

The  sweep map is an active subject in the recent decades. Variations and extensions have been found, and some classical bijections turn out to be
the disguised version of the sweep map. See \cite{sweepmap} for detailed information and references. One major problem in this subject
is the invertibility of the sweep map. The bijectivity has been shown in a variety of special  cases including
the Fuss case  when $m=kn\pm1$ which is proved in \cite{Loehr-higher-qtCatalan},\cite{Gorsky-Mazin2}and\cite{Fuss-case}.
The invertibility of the sweep map, even for rational Dyck paths, remained open for over ten years.
Surprisingly, a general result proving the invertibility of a class of sweep maps that were listed in \cite{sweepmap}, was recently given by
Thomas-Williams in \cite{Nathan}. Based on the idea of Thomas and Williams, Garsia and Xin \cite{Rational-Invert} gave a geometric construction for inverting the sweep on $(m,n)$-rational Dyck paths.
These algorithms are nice iteration algorithms, but are not linear: the number of iterations is measured by the sum of the ranks of $\oD$.

By using a completely different approach, Garsia and Xin find a $O(m+n)$ algorithm for inverting sweep map on $(m,n)$-Dyck paths in the Fuss case $m=kn \pm 1$.
This raises the following problem.
\def\k{\mathbf{k}}

\noindent
\textbf{Problem:} Is there a linear algorithm to invert the sweep map, at least for a more general class of paths?

We find positive answer for $\k^+$, $\k^-$, and $\k$-Dyck paths by extending Garsia-Xin's idea.

\subsection{The notation of general Dyck paths}
We start by introducing general Dyck paths using model 3.

\emph{General Dyck paths} are two dimensional lattice paths from $(0,0)$ to $(m+n,0)$ that never go below the horizonal axis.
We use vectors $\mathbf{u}=(u_1,\dots, u_n)$ and $\mathbf{d}=(d_1,\dots, d_m)$ to specify the up steps and down steps,
so that $\CD_{\mathbf{u},-\mathbf{d}}$ is the set of general Dyck paths with up steps $(1,u_i), \ 1\le i\le n$ from left to right,
and down steps $(1,-d_j), \ 1\le j \le m$ from left to right. Clearly the total length of up steps $|\mathbf{u}|=u_1+u_2+\cdots +u_n$
is equal to the total length of down steps $|\mathbf{d}|=d_1+d_2+\cdots +d_m$.
When each $d_i=1$, we use the short hand notation $\CD_{\mathbf{u}}=\CD_{\mathbf{u},-\mathbf{1}_{|\mathbf{u}|}}$, where $\mathbf{1}_a$ consists of $a$ $1$'s.
Here we usually restrict $u_i$ and $d_j$ to be positive integers, but sometimes allowing rational numbers is convenient.

A general Dyck path $\oD$ may be encoded as $\oD=(a_1,a_2,\dots, a_{m+n})$ with each entry either $u_i$ or $-d_j$. The rank sequence $r(\oD)=(0=r_1,r_2,\dots, r_{m+n})$ of $\oD$ is defined as the partial sums
$r_i=a_1+a_2+\cdots +a_{i-1}\ge 0$, called starting rank (or level) of the $i$-th step. Geometrically, $r_i$ is just the level or $y$-coordinate of the starting point of the $i$-th step.
The SW-word of $\oD$ is $\SW(\oD)=\sigma_1\sigma_2\cdots \sigma_{m+n}$ where $\sigma_i=S^{a_i}$ if $a_i>0$ and $\sigma_i=W^{-a_i}$ if $a_i<0$ (with $W^1=W$).
The sweep map $D$  of $\oD$ is obtained by reading its steps by their starting levels from bottom to top, and from right to left in the same level.
This corresponds to sweeping the starting points of the steps from bottom to top using a line of slope $\epsilon$ for sufficiently small $\epsilon>0$.

\begin{figure}[!ht]
  $$
 \hskip .005in \oD= \hskip -1.95in\vcenter{ \includegraphics[height= 1.06 in]{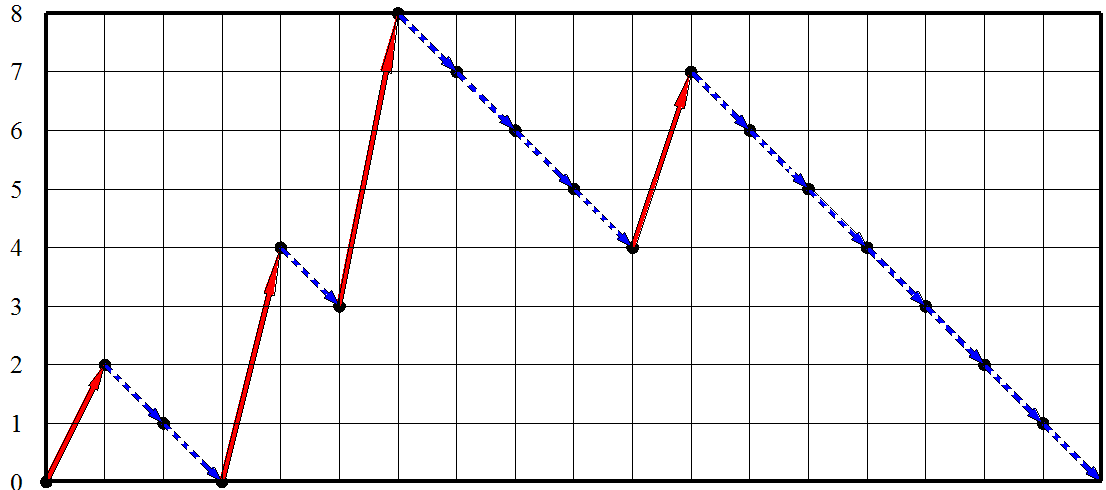}}
\hskip -2.2in \qquad \Longrightarrow\qquad\hskip -0.3inD= \hskip -1.95in \vcenter{ \includegraphics[height=1.06 in]{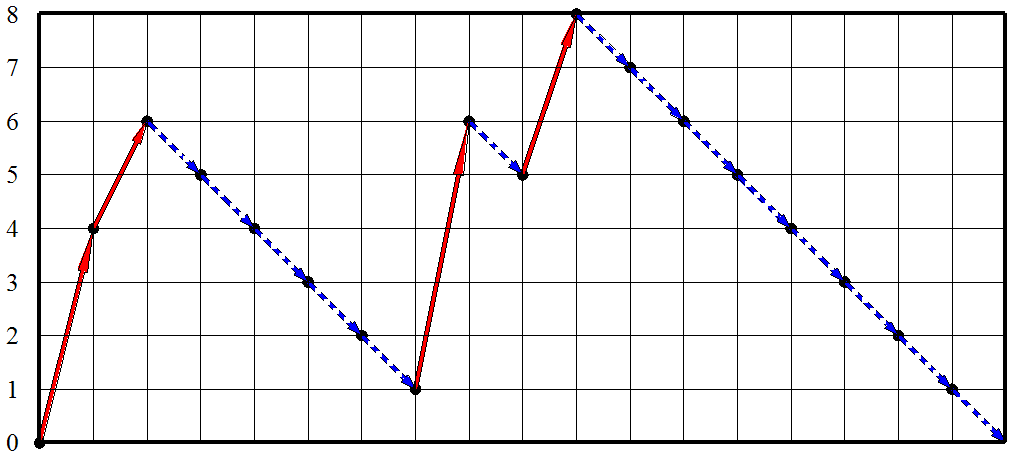}}
$$
\caption{An example of a $\k$-Dyck path and its sweep map image.
\label{fig:model3-Example}}
\end{figure}

Figure \ref{fig:model3-Example} illustrates an example of $\k$-Dyck path $\overline{D}$ given by
$$ \overline{D}=(2,-1,-1,4,-1,5,-1,-1,-1,-1,3,-1,-1,-1,-1,-1,-1,-1),$$
where $\k=(2,4,5,3)$. The SW-word of $\overline{D}$ and the rank sequence are given by
$$ \left(\SW(\overline{D}) \atop r(\overline{D})   \right)=
\left(\begin{array}{cccccccccccccccccc}
                                        S^2  \!\!\! &  W  \!\!\! &  W  \!\!\! &  S^4  \!\!\! &  W  \!\!\! &  S^5  \!\!\! &  W  \!\!\! &  W  \!\!\! &  W  \!\!\! &  W  \!\!\! &  S^3  \!\!\! &  W  \!\!\! &  W  \!\!\! &  W  \!\!\! &  W  \!\!\! &  W  \!\!\! &  W  \!\!\! &  W \\
                                        0    \!\!\! &  2  \!\!\! &  1  \!\!\! &  0    \!\!\! &  4  \!\!\! &  3    \!\!\! &  8  \!\!\! &  7  \!\!\! &  6  \!\!\! &  5  \!\!\! &  4    \!\!\! &  7  \!\!\! &  6  \!\!\! &  5  \!\!\! &  4  \!\!\! &  3  \!\!\! &  2  \!\!\! &  1 \\
                                      \end{array}\right). $$
The sweep map image is
$$D=(4,2,-1,-1,-1,-1,-1, 5,-1,3,-1,-1,-1,-1,-1,-1,-1,-1),$$
with SW-word $\SW(D)=S^4 S^2 W W W W W S^5 W S^3 W W W W W W W W$.

\begin{exam}
  \begin{enumerate}
    \item $\CD_{\mathbf{1}_n}$ is the set of classical Dyck paths in the $n\times n$ square (rotated version).
    \item $\CD_{m\mathbf{1}_n, -n\mathbf{1}_m}$ is just $\CD_{m,n}$, the set of rational $(m,n)$-Dyck paths.

    \item $\CD_{k\mathbf{1}_n}$ is just $\CD_{kn,n}$, the set of  $k$-Dyck paths of length $n$. (Their rank sequences differed by a factor $n$).
  \end{enumerate}
\end{exam}

In what follows, $\k=\vec{k}=(k_1,k_2,\dots, k_n)$ is always a vector of $n$ positive integers.
We will focus on $\mathbf{k}$-Dyck paths in $\CD_{\k}$, i.e., general Dyck paths whose up steps are of lengths $k_1,\dots, k_n$ from left to right, and whose down steps
are all of length $1$. We will also consider $\mathbf{k}^+$-Dyck paths in $\CD_{\k^+}$ and $\mathbf{k}^-$-Dyck paths in $\CD_{\k^-}$, where
$\k^{\pm}=\k\pm \frac{1}{n} \mathbf{1}_n$. These are natural extension of the Fuss case rational Dyck paths: $k$-Dyck paths are
 just $k\mathbf{1}_{n}$-Dyck paths; Fuss case $(kn\pm 1,n)$-Dyck paths are easily seen to be equivalent to $\k^\pm$-Dyck paths.

The sweep map of a $\k$-Dyck path is usually a $\k'$-Dyck path where $\k'$ is obtained from $\k$ by permuting its entries. Denote by $\mathcal{K}$ the set of
all such $\k'$ and by $\CD_{\mathcal{K}}$ the union of $\CD_{\k'}$ for all such $\k'$. We also use similar notation for $\mathcal{K}^\pm$ and
$\CD_{\mathcal{K}^\pm}$.

\begin{theo}
The sweep maps define bijections from $\CD_{\mathcal{K}^+}$, $\CD_{\mathcal{K}^-}$, and $\CD_{\mathcal{K}}$ to themselves.
\end{theo}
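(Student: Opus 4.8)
The plan is to prove all three statements at once by exhibiting an explicit left inverse of the sweep map, extending the tableau method of Garsia and Xin from the Fuss case to non-uniform parameters. I would begin with well-definedness and a reduction. The sweep map only rearranges the steps of a path $\oD$, so it preserves the multiset of up-step lengths and the all-ones down steps; by a standard level-crossing argument the rearranged word again stays weakly above the axis, so the image lies in $\CD_{\k'}$ for a permutation $\k'$ of the parameter vector. Hence $\Phi$ maps $\CD_{\mathcal{K}}$ into itself, and likewise $\CD_{\mathcal{K}^+}$ and $\CD_{\mathcal{K}^-}$ into themselves, the relevant up-step lengths $\{k_i\pm\tfrac1n\}$ being merely permuted. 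Each of these sets is finite, so an injective self-map is automatically bijective, and it therefore suffices to construct a map $\Psi$ with $\Psi\circ\Phi=\mathrm{id}$.

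The core of the construction is the intermediate family $\mathcal{T}_{\k}$ of tableaux with columns of the prescribed lengths $k_1,\dots,k_n$, in contrast to the single common column length available in the Fuss case. Conceptually, the sweep reads the steps of $\oD$ by increasing starting level and, within a level, from right to left; equivalently it sorts the steps into ``level buckets'' and concatenates the buckets from bottom to top. Given an image $D$, I would read $\SW(D)$ together with its directly computable rank sequence and encode this data as a tableau $T=T(D)\in\mathcal{T}_{\k}$, the shape recording the up-step lengths and the entries recording the order in which the sweep emitted the cells. The inverse map $\Psi$ is then a walking algorithm on $T$: it traverses the cells by a purely local rule dictated by the ranks and outputs the steps of $\oD$ in their original left-to-right order, thereby re-interleaving the buckets that the sweep had flattened.

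Two points carry the real content: recovering from $\SW(D)$ alone enough rank information to build $T$, and proving that the walk genuinely unsweeps. I expect the second to be the main obstacle. One must show the walk terminates with an honest path in the correct family and that applying $\Phi$ to its output returns $D$; this reduces to checking that the local tableau rule exactly reverses the prescription ``bottom to top, right to left within a level,'' and the delicate new features --- absent from the equal-parameter setting --- are the non-uniform column lengths and the tie-breaking among steps sharing a rank. For $\CD_{\mathcal{K}^\pm}$ I would exploit that the shift by $\pm\tfrac1n\mathbf{1}_n$ makes the ranks generic, breaking most ties and directly generalizing the near-coprimality $m=kn\pm1$ of the Fuss case, so that $T$ and its walk behave essentially as in Garsia--Xin. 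The integral family $\CD_{\mathcal{K}}$, which specializes to $k$-Dyck paths when all $k_i$ are equal, is the most delicate precisely because these ties are present, and it is in calibrating the shape and filling of $\mathcal{T}_{\k}$ to handle them that I expect the argument to require the most care.
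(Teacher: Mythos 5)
Your overall strategy is exactly the paper's: reduce bijectivity to constructing an explicit inverse (injectivity on a finite set suffices), pass through an intermediate family $\mathcal{T}_{\k}$ of tableaux with column lengths $k_1,\dots,k_n$, and recover $\oD$ by a walking procedure on the tableau. You also correctly diagnose where the difficulty sits: the $\k^\pm$ cases behave like Garsia--Xin because the shift by $\pm\tfrac1n$ makes all ranks distinct, while the integral case $\CD_{\mathcal{K}}$ is hard precisely because of rank ties. But the proposal stops at the diagnosis; the two steps that carry the mathematical content are missing, and one of them is misdescribed. First, for the $\k$-case the walk cannot be driven by ``a purely local rule'' on $T(D)$ alone: the paper needs a second intermediate object, the rank tableau $R(D)$ produced by a separate Ranking Algorithm, and the walking rule is \emph{global} --- at each step it jumps to ``the largest \emph{unmarked} entry of rank $r$'' anywhere in the tableau, which is exactly how the tie-breaking among equal ranks (your ``right-to-left within a level'') is encoded. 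Your text never constructs this object, and without it there is no well-defined walk in the presence of ties.

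Second, and more seriously, nothing in the proposal addresses why the walk terminates only after emitting all $n+|\k|$ steps. This is the paper's Lemma \ref{l-length-sigma(D)}, and it is the crux: if the walk could close up early, the output would not even be a candidate preimage. The paper proves it either by a counting argument comparing $n(r)$, $n^-(r-k)$, $n^\wedge(r+1)$, or by building a balanced digraph $G_R$ from $R(D)$ (each column contributes a directed cycle, so in-degree equals out-degree at every rank) and running an Eulerian-tour-type argument, combined with the monotonicity lemma that ranks are weakly increasing in the index order with increments $0$ or $1$. Saying ``one must show the walk terminates with an honest path'' names the obstacle but supplies no mechanism for overcoming it; as written, your $\Psi$ is not defined in the tied case and its key property is asserted rather than proved, so the proposal is an accurate plan of the paper's proof rather than a proof.
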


It is known but nontrivial that the sweep map takes a Dyck path to another Dyck path (see, e.g., \cite{Rational-Invert} for a proof), so the proof of the theorem boils down to construct
the inverse image $\overline{D}$ from a given Dyck path $D$.

The three sets $\CD_{\k}$, $\CD_{\k^+}$, and $\CD_{\k^-}$ are closely related as follows. For $D\in \CD_{\k}$, let $\SW(D^+)$ be obtained from $\SW(D)$ by adding
a $W$ at the end and changing every $S^a$ to $S^{a+1/n}$. Similarly let $\SW(D^-)$ be obtained from $\SW(D)$ by removing
the final letter $W$ and changing every $S^a$ to $S^{a-1/n}$. It is clear that the map $D\mapsto D^+$ gives a bijection from
$\CD_{\k}$ to $\CD_{\k^+}$. However, the map $D\mapsto D^-$ is a little different: it is a bijection from $\CD_{\k}^\circ$ to $\CD_{\k^+}$,
where $\CD_{\k}^\circ$ consists of paths $D\in \CD_{\k}$ whose rank sequence $r(D)$ has only one $0$ at $r_1$.

Figure $\ref{fig:idea}$  illustrates the idea: Though the paths ${D^+}\in\CD_{\k^+}$ and $D\in\CD_{\k}$ have almost identical pictures, their sweep map inverse images $\overline{D^+}\in\CD_{\k'^+}$ and $\oD\in\CD_{\k''}$ may be very different, due to the different sweep order.
Therefore, their inverting algorithma are also very different.

\begin{figure}[!ht]
  \begin{center}
 \includegraphics[height= 1.3 in]{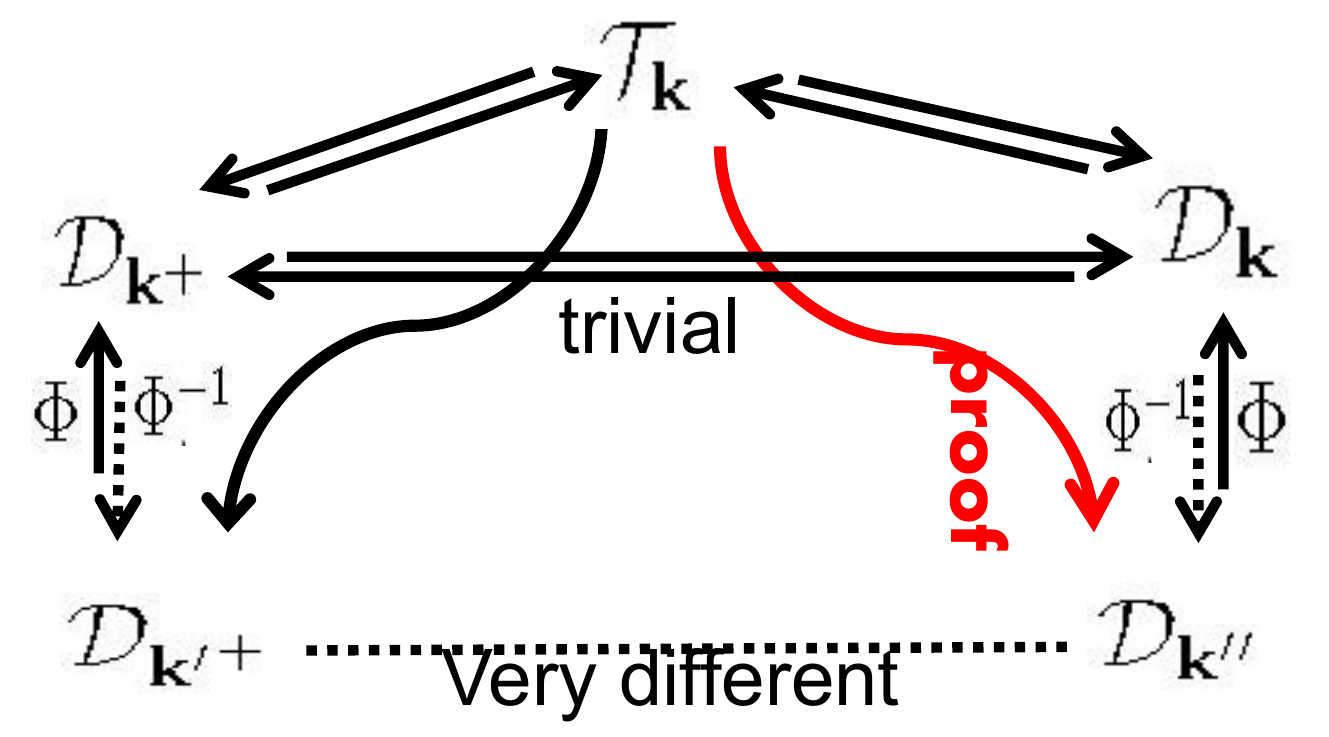}
  \end{center}
  \caption{The idea for inverting the sweep map: Solid curve means easy, and dotted curve means difficult. So to obtain $\overline{D}$, we need the help of $T$.
  \label{fig:idea}}
\end{figure}

\subsection{The Filling algorithm and the $\vec{\mathbf{k}}$ tableaux}
In the Fuss case when $m=kn+1$, a linear algorithm to invert the sweep map was discovered by Garsia and Xin in \cite{Fuss-case}. The algorithm relies on an intermediate family $\TAU_n^k$ of standard Young tableaux.
The family $\TAU_n^k$ consists of $n\times (k+1)$ arrays with entries $1,2,\ldots ,kn+n$, column increasing from top to bottom and row increasing from left to right, with the additional property that for any pair of entries $a<d$  with $d$ directly below $a$, the entries between $a$ and $d$ form a horizontal strip. That is, any pair of entries  $b,c$ with $a<b<c<d$ never appear in the same column. The standard Young tableau $T(D)$ constructed from the SW word of a path $D$  encodes so much information about $D$. This allows us to invert the sweep map in the simplest possible way.

We find Garsia-Xin's construction naturally extends for $\k^+$-Dyck paths. The intermediate family $\TAU_n^k$ becomes the family $\TAU_{\k}$,
where the only difference is that tableau $T\in \TAU_{\k}$ has $k_i+1$ entries in the $i$-th column. The Filling Algorithm \cite{Fuss-case} is adapted in our case as follows, where the major
change is the definition of \emph{active}.
\begin{algo}[Filling Algorithm]
\label{al-Filling Algorithm}

\noindent
Input: The SW-sequence $\SW(D)$ of a $\k$-Dyck path $D\in \CD_{\k}$.

\noindent
Output: A tableau $T=T(D)\in \TAU_{\k}$.

\begin{enumerate}
\item   Start by placing a $1$ in the top row and the first column.

\item  If the second letter in $\SW(D)$ is an $S^*$ we put a $2$ on the top of the second column.

\item   If the second letter in $\SW(D)$ is a $W$ we place $2$ below the $1$.

\item  At any  stage  the entry at the bottom of the $i$-th column but not in row $k_i+1$ will be called \emph{active}.

\item  Having placed $1,2,\cdots i-1$, we place $i$ immediately below the smallest  active entry if the $i^{th}$ letter in $\SW(D)$ is a $W$, otherwise we place $i$ at the top of the first empty column.

\item   We carry this out recursively until $1,2,\ldots ,n+|\k|$ have  all  been placed.
\end{enumerate}
\end{algo}

We will denote by $t(T)=(t_1,\dots, t_n)$ the top row entries of $T$ from left to right, and similarly by $b(T)=(b_1,\dots, b_n)$ the bottom entries.
Note that the former is always increasing, but the latter is not.
See Figure $\ref{fig:Filling Algorithm}$ for an example.
\begin{figure}[!ht]
 $$
 \hskip .15in T= \hskip -2.6in\vcenter{ \includegraphics[height= 1.5 in]{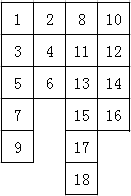}}
 \hskip -2.3in
 \begin{array}{l}
 SW(D)=S^4 S^2 W W W W W S^5 W S^3 W W W W W W W W\\
 \MK=(4,2,5,3)\\
 t(T)=(1, 2, 8, 10)\\
  b(T)=(9, 6, 18, 16)
 \end{array}
$$
 \caption{The path $D$ in Figure \ref{fig:model3-Example} and its filling tableau by the Filling algorithm.
 \label{fig:Filling Algorithm}}
\end{figure}

It is clear that the top row entries $t(T)$ uniquely determines $T$. Moreover, $\SW(D)$ can be recovered from
$T(D)$ by  placing  letters $S^{k_i}$ on the positions $t_i$ of $T(D)$ and letters $W$ in all the remaining $|\k|$ positions.
Indeed, we have the following characterization.

\begin{lem}
A sequence $(t_1,\dots, t_n)$ is the top row entries $t(T)$ for some $T\in \TAU_\k$ if and only if $t_i\le k_1+\cdots+k_{i-1}+i$ holds for all $i$.
\end{lem}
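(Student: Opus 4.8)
The plan is to treat the two implications separately. First I record that the top row of any $T\in\TAU_\k$ is strictly increasing (apply row-monotonicity to the first row), so ``a sequence'' here really means a strictly increasing one and the content is the displayed inequality. For the ``only if'' direction I would argue by a cell count. Suppose $(t_1,\dots,t_n)=t(T)$ and fix $i$. If a value $v<t_i$ occupies a cell in column $j$, then since column $j$ increases downward its top entry $t_j$ satisfies $t_j\le v<t_i$; because the top row is increasing this forces $j<i$. Hence all $t_i-1$ values $1,2,\dots,t_i-1$ are crammed into the first $i-1$ columns, which together hold $(k_1+1)+\cdots+(k_{i-1}+1)=k_1+\cdots+k_{i-1}+(i-1)$ cells. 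Comparing counts yields $t_i-1\le k_1+\cdots+k_{i-1}+(i-1)$, i.e.\ the asserted bound. Note this uses only column- and row-monotonicity, not the horizontal-strip condition.

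For the ``if'' direction I would build $T$ through the Filling Algorithm, after first certifying that the input makes sense. From the increasing sequence form the word $\SW=\sigma_1\cdots\sigma_{n+|\k|}$ by putting $S^{k_i}$ in position $t_i$ and a $W$ in every other position, exactly the rule that recovers $\SW(D)$ from $T(D)$. The crux is a level computation: the starting level of position $p$ is $L_p=\sum_{t_j<p}k_j-\#\{\,W\text{'s before }p\,\}$, and because between consecutive $S$-blocks the levels only decrease while each $S$-block jumps up, the relevant minima of $p\mapsto L_p$ are attained at the $S$-positions (the terminal level being $0$). At $p=t_i$ there are precisely $i-1$ earlier $S$-blocks, so $L_{t_i}=k_1+\cdots+k_{i-1}+i-t_i$; thus the hypothesis $t_i\le k_1+\cdots+k_{i-1}+i$ for all $i$ is equivalent to $L_p\ge0$ for all $p$, i.e.\ to $\SW$ being the SW-word of a genuine path $D\in\CD_\k$.

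It then remains to run the Filling Algorithm on this $\SW(D)$. Columns are opened only at $S$-positions and in left-to-right order, so column $i$ receives top entry $t_i$ and the output has top row $(t_1,\dots,t_n)$; the only thing to check is that the algorithm never stalls, i.e.\ that at each $W$-position there is an active bottom-of-a-nonfull-column entry to sit under. Here the same computation reappears: at a $W$-position $p$ exactly $p-1$ cells have been placed into created columns of total capacity $\sum_{t_j<p}(k_j+1)$, so ``room for one more cell'' is equivalent to $L_p\ge1$, which holds because $p$ is a down-step of the Dyck path $D$ (and $t_1=1$ guarantees a column already exists). Granting the stated property that the Filling Algorithm sends the SW-word of a $\k$-Dyck path to a tableau in $\TAU_\k$, this produces the required $T$ with $t(T)=(t_1,\dots,t_n)$.

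I expect the sufficiency direction to be the main obstacle, and within it the two bookkeeping facts it rests on: pinning the minima of the level sequence to the $S$-positions, so that the single family $L_{t_i}\ge0$ already certifies non-negativity of the whole path, and the dictionary ``room for a new $W$'' $\Leftrightarrow$ ``$L_p\ge1$'' that prevents the algorithm from halting before every column of a $\TAU_\k$ tableau is filled.
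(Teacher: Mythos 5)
The paper states this lemma without proof---it sits bare between the Filling Algorithm and the bijection theorem---so there is no official argument to compare yours against; judged on its own terms, your proof is correct and fills a real gap. Your necessity half is a clean, self-contained counting argument: every value below $t_i$ must live in one of the first $i-1$ columns (column-monotonicity puts it below some top entry $t_j\le v<t_i$, and the increasing top row forces $j<i$), and those columns hold only $k_1+\cdots+k_{i-1}+(i-1)$ cells. Your sufficiency half follows what is plainly the intended route, since the paper itself notes just before the lemma that $\SW(D)$ is recovered from $t(T)$ by placing $S^{k_i}$ at position $t_i$: the level computation $L_{t_i}=k_1+\cdots+k_{i-1}+i-t_i$, together with the observation that the minima of the level sequence occur at the $S$-positions and at the terminal point (level $0$), correctly identifies the displayed inequalities with the condition that the constructed word is $\SW(D)$ for a genuine $D\in\CD_{\mathbf{k}}$. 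The one dependency to keep in view is the one you flag yourself: you grant that the Filling Algorithm sends such an $\SW(D)$ to a tableau in $\TAU_{\mathbf{k}}$, i.e.\ the output specification of Algorithm 1.3, which is part of Theorem 1.5 and is likewise asserted in the paper without proof. This creates no circularity---row/column monotonicity and the horizontal-strip condition for the filling can be verified without the present lemma, and it is rather the surjectivity half of Theorem 1.5 that would consume your necessity direction---but a fully self-contained proof would still have to carry out that verification; note that your $L_p\ge 1$ bookkeeping at $W$-positions only re-establishes termination (the paper's Lemma 2.1), not membership in $\TAU_{\mathbf{k}}$. Finally, your reading of ``a sequence'' as a strictly increasing one is needed for the statement to be literally true and matches the paper's remark that $t(T)$ is always increasing.
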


\begin{theo}
The Filling algorithm defines a bijection from $\CD_{\k} $ to $\TAU_{\k}$.
\end{theo}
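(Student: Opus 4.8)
The plan is to factor the Filling map as a composite of two bijections: the assignment $D\mapsto (p_1,\dots,p_n)$, where $p_i$ is the position of the $i$-th up-step $S^{k_i}$ in $\SW(D)$, and the map sending an admissible sequence to the unique tableau with that top row. The second map is a bijection by the preceding Lemma together with the remark that $t(T)$ determines $T$, so the real work is to build the first bijection and to check that the Filling algorithm computes the composite. Concretely I would establish, for $T=T(D)$: (i) the algorithm never stalls; (ii) $T\in\TAU_\k$; and (iii) $t(T)=(p_1,\dots,p_n)$, with column $j$ recording the up-step of length $k_j$.

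First I would set up the dictionary between $\CD_\k$ and admissible sequences. The letter at position $p_i$ is preceded by $i-1$ up-steps of total length $k_1+\cdots+k_{i-1}$ and by $p_i-i$ down-steps, so its rank is $r_{p_i}=(k_1+\cdots+k_{i-1})-(p_i-i)$. Since the rank strictly decreases along each maximal run of $W$'s, every minimum of $r(D)$ is attained just before some up-step (or equals the terminal height $0$); hence $D$ stays weakly above the axis iff $r_{p_i}\ge 0$ for all $i$, i.e. iff $p_i\le k_1+\cdots+k_{i-1}+i$. This is exactly the inequality of the preceding Lemma, so $D\mapsto(p_1,\dots,p_n)$ is a bijection from $\CD_\k$ onto the admissible top-row sequences.

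Next I would dispatch (iii) and (i). The algorithm opens a new column precisely on an $S$-letter, and the $j$-th such letter opens column $j$; since there are $n$ up-steps and $n$ columns this never fails, and it gives $t(T)=(p_1,\dots,p_n)$ with column $j$ attached to $S^{k_j}$. For a $W$-letter at step $i$, suppose no entry is active; then the opened columns $1,\dots,s$ are all full, so $i-1=\sum_{j\le s}(k_j+1)$, whence the number of down-steps read so far is $k_1+\cdots+k_s$ and $r_i=0$. A $W$ at height $0$ violates the Dyck condition, a contradiction, so an active entry always exists and the algorithm terminates having placed all $n+|\k|$ values; as total capacity $\sum_j(k_j+1)=n+|\k|$ and no column is ever overfilled, every column reaches its full height $k_j+1$. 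Column-strictness is immediate, since each new value exceeds all earlier ones and is placed at the bottom of its column.

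The main obstacle is (ii), the horizontal-strip and row-increasing conditions; both flow from one monotonicity principle. When $d$ is placed directly below $a$, the entry $a$ is the smallest active entry at step $d$ and is the active bottom of its column throughout steps $a,\dots,d-1$; in particular $a$ is active at every step $v$ with $a<v<d$. If two values $b<c$ of the interval $(a,d)$ shared a column, the entry $b'$ immediately below $b$ would satisfy $b<b'\le c<d$ and be placed under $b$, making $b$ the smallest active entry at step $b'$, which is impossible since $a<b$ is active then; this proves the horizontal-strip property. For rows I would induct on the row index $r$: given row $r-1$ increasing, let $x,y$ occupy $(j,r),(j',r)$ with $j<j'$ and north neighbours $x^-<y^-$. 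If $x>y$ then $x^-<y$ (otherwise $x^->y>y^-$ contradicts the inductive hypothesis), and since $x$ is not yet placed, $x^-$ is still the active bottom of column $j$ at step $y$; then the active entry $x^-<y^-$ contradicts $y^-$ being smallest active at step $y$. Hence $x<y$ and row $r$ increases. With (i)--(iii) verified, the Filling map is the composite of the two bijections above and is therefore a bijection $\CD_\k\to\TAU_\k$.
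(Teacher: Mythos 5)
Your proof is correct, and it supplies something the paper does not: the paper states this theorem with no proof at all, offering only the remark that $t(T)$ determines $T$ and that $\SW(D)$ is recoverable by placing $S^{k_i}$ at the positions $t_i$, plus the (also unproven) characterization lemma; even the fact that the algorithm succeeds in placing all of $1,\dots,|\k|+n$ is deferred to Lemma \ref{1-1.2}, again stated without proof. Your argument is the natural completion of exactly that sketch --- factor the Filling map through admissible top-row sequences --- and the substantive verifications are all yours: the rank computation $r_{p_i}=k_1+\cdots+k_{i-1}-(p_i-i)$, combined with the observation that the minima of the path occur at starts of up-steps, which identifies the Dyck condition with the inequality $p_i\le k_1+\cdots+k_{i-1}+i$; the no-stall argument (all opened columns full forces a $W$ at height $0$), which proves Lemma \ref{1-1.2} en route; and the check that the output lies in $\TAU_\k$, where the smallest-active-entry argument for the horizontal-strip ($abcd$) condition and the induction on rows are both sound. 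Two minor remarks. First, your appeal to the existence half of the characterization lemma is dispensable: your own chain (admissible sequence $\mapsto$ path $\mapsto$ filling) exhibits a tableau with the given top row, so from that lemma you only need the easy capacity-count direction, namely $t_i\le k_1+\cdots+k_{i-1}+i$ because every entry smaller than $t_i$ must occupy columns $1,\dots,i-1$, whose total capacity is $k_1+\cdots+k_{i-1}+i-1$. Second, the uniqueness of $T$ given $t(T)$, which you quote as a remark, is genuinely needed for surjectivity and is asserted but not proved in the paper; it follows from your horizontal-strip reasoning run in reverse --- in any $T\in\TAU_\k$ the $abcd$ condition forces each non-top entry to sit directly below the smallest entry that is currently bottom-most in its column and not in that column's last row, so $T$ is reconstructed from $t(T)$ by the same placement rule --- and it would be worth one sentence in your write-up to close that loop.
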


\subsection{Walking algorithm for $\vec{\mathbf{k}}^\pm$-Dyck paths}

The walking algorithm for inverting the sweep map on $\CD_{kn\pm 1,n}$ naturally extends to that of  $\k^\pm$-Dyck paths.
To state our results,
we need to modify some notations.
For $T\in \TAU_{\k}$, let $T^+$ be obtained from $T$ by adding $n+|\k|+1$ below the entry $n+|\k|$. Let $\TAU_{\k}^+= \{ T^+: T\in \TAU_{\k}\}$.
The bottom entry of the $i$-th column of $T^+$ refers to the $(k_i+1)$-st entry for all $i$ including the column
with the added entry $n+|\k|+1$.
\begin{algo}[Walking Algorithm$^+$]
\label{al-Walking Algorithm^+}

\noindent
Input: A tableau $T^+=T(D^+)\in \TAU_{\k}^+$.

\noindent
Output: A permutation $\sigma(D^+)$ through walk  on $T^+$.

\begin{enumerate}
\item Write in bold  all the entries in $T^+$ that are by 1 more than a bottom row entry.
\item Go to 1 and write 1.
\item If you are in row  $1$ go down the column to the bottom. If the entry there is $r$, then go to $r+1$ and write  $r+1$.
\item If you are not in the first row go up the column one row. If the entry there is $r$ and is not bold write $r$.
\item If the entry there  is $r$ and bold go to $r-1$ and continue until
you run into a normal entry, then write it.
\end{enumerate}
\end{algo}
Once the permutation $\sigma(D^+)$ is obtained, the SW-word $\ SW\big(\overline{D^+}\big)$ of the inverse image
$\overline{D^+}$ is easy to construct: we write one letter at a time by placing above each entry of $\sigma(D^+)$ an $S^{{k_i^+}}$ if that entry is $t_i$ and a $W$ if that entry is not in row $1$.
This done we can simply draw $\overline{D^+}$ by reading the sequence of letters of $\ SW\big(\overline{D^+}\big)$. Note that
we will also use this construction for $\sigma(D^-)$ and $\sigma(D)$ in a similar way.

\begin{theo}For a Dyck path $D^+\in \CD_{\k^+}$, the permutation $\sigma(D^+)$ that rearranges the letters of $\SW(D^+)$ in the order that gives the $SW$ word of $\overline{D^+}$, is obtained by
the Walking Algorithm$^+$(Algorithm $\ref{al-Walking Algorithm^+}$).
\end{theo}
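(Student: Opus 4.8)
The plan is to prove that the Walking Algorithm$^+$ writes the labels $1,2,\dots ,n+|\k|+1$ of $T^+$ in exactly the order in which the corresponding steps occur along $\overline{D^+}$; since each label $e$ is, by construction, the position in $D^+$ of one step of $\overline{D^+}$, this is precisely the assertion that the output equals $\sigma(D^+)$. Write $\rho(e)$ for the starting rank in $\overline{D^+}$ of the step sitting at position $e$ of $D^+$, and $u(e)$ for the number of up steps of $\overline{D^+}$ strictly preceding it. Because an up step raises the rank by $k_j+\tfrac1n$ while a down step lowers it by the integer $1$, the fractional part of $\rho(e)$ records $u(e)$ modulo $n$; consequently the $\k^+$-perturbation makes all the $\rho(e)$ distinct, exactly as in the Fuss case $m=kn+1$. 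Since $D^+$ is the rank-sorted rearrangement of $\overline{D^+}$, the labels $1,2,\dots ,n+|\k|+1$ list the steps of $\overline{D^+}$ by strictly increasing $\rho$. The induction is organized around one invariant: after a label $e$ has been written, the next label to be written is the \emph{unique} step whose starting rank equals the end rank of $e$, namely $\rho(e)-1$ if $e$ is a down step, and $\rho(e)+k_i+\tfrac1n$ if $e$ is the top of column $i$.

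The bulk of the work is to verify the three motion rules against this invariant. For Rule 3, let $e$ be the top of column $i$ and let $r$ be its bottom entry (the $(k_i+1)$st entry of the column). The claim is that the successor carries the label $r+1$; equivalently, that $\rho(r)$ is immediately below $\rho(e)+k_i+\tfrac1n$ in the sorted rank order. Here I would combine the bijection $\CD_{\k}\cong\TAU_{\k}$ with the Filling Algorithm to control the ranks along column $i$, and then show that no step of $\overline{D^+}$ has rank strictly between $\rho(r)$ and the rank reached by the opening up step, which forces the successor to be $r+1$.

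For Rules 4 and 5, let $e'$ be a down step, so its successor is the unique step of rank $\rho(e')-1$, and let $f$ be the cell directly above $e'$ in its column. The key lemma is that the bold cells—those equal to a bottom entry plus $1$—are exactly the cells $f$ at which the cell below ends at a rank \emph{strictly less} than $\rho(f)-1$, i.e.\ the rows where moving up one step overshoots the correct successor because the fractional part $u/n$ jumps at a return of the path. Granting this, if $f$ is normal then $\rho(f)=\rho(e')-1$ and $f$ is the successor, while if $f$ is bold then $\rho(f)>\rho(e')-1$ and decrementing $f\mapsto f-1\mapsto\cdots$ until a normal label is reached lands exactly on the step of rank $\rho(e')-1$, each bold label skipped corresponding to a distinct excursion closing at that integer level.

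With the local lemmas in hand the theorem follows by induction: the walk starts at label $1$, which is the unique rank-$0$ step and hence an up step placed at the top of column $1$, so the invariant holds initially and is preserved at each move; every label is written once, and the written sequence is a closed lattice path from rank $0$ to rank $0$. Being a rank-respecting rearrangement of $\SW(D^+)$ that is itself a path, it can only be $\overline{D^+}$, so the output is $\sigma(D^+)$. I expect the hard part to be precisely the two local lemmas of the middle paragraphs, and within them the interplay between the Filling Algorithm's column geometry and the \emph{fractional} tie-breaking forced by $\k^+$: in the unperturbed $\k$-case the rank rises by exactly $1$ down each column, whereas for $\overline{D^+}$ the term $u(e)/n$ makes the column ranks jump precisely at the path's returns, and the crux is to show that the bold marking detects these jumps and that the corrections $r\mapsto r+1$ and $r\mapsto r-1$ reproduce the correct fractional order wherever several excursions meet at one integer level.
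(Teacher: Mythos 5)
Your setup is correct and is a genuinely different route from the paper's: the observation that the $\frac{1}{n}$-perturbation makes all starting ranks of $\overline{D^+}$ distinct, that the labels of $T^+$ list the steps of $\overline{D^+}$ in increasing rank, and that the walk is correct if and only if every move lands on the unique label whose rank is the end rank of the current step — all of this is right. But the proposal has a genuine gap: the two ``local lemmas'' you defer are not technical verifications, they \emph{are} the theorem. Lemma (i): the label of rank $\rho(t_i)+k_i+\frac{1}{n}$ is $b_i+1$, where $b_i$ is the $(k_i+1)$-st entry of column $i$. Lemma (ii): $f$ is bold exactly when the cell $e'$ below it satisfies $\rho(e')-\rho(f)<1$, and the decrementing rule from a bold $f$ stops precisely at the label of rank $\rho(e')-1$. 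Both assert a quantitative link between the Filling tableau, which is computed from $D^+$ alone, and the rank order of the unknown preimage $\overline{D^+}$; producing that link is exactly the inverse-sweep problem, so ``I would combine the bijection $\CD_{\k}\cong\TAU_{\k}$ with the Filling Algorithm'' and ``granting this'' are placeholders for the proof rather than steps of it. Two smaller symptoms that the lemmas were never checked: as literally stated, your bold-cell criterion (``the cell below ends at a rank strictly less than $\rho(f)-1$'') can never hold, since the cell below $f$ carries a larger label, hence a larger rank, so its end rank $\rho(e')-1$ always exceeds $\rho(f)-1$ (the inequality you use afterwards, $\rho(e')-1<\rho(f)$, is the correct one); and the added entry $n+|\k|+1$ is always bold yet has no cell below it, so the claimed equivalence needs an explicit exception (harmless, because the walk never enters that cell from below, but it has to be said).

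For contrast, the paper does not argue cell-locally at all: it proves the theorem by induction on the number of columns, deleting the column containing the smallest bottom entry $\min(b(T))$ — such a column has no bold entries, and its deletion preserves the $abcd$ condition — and then comparing the walk $\sigma(D^+)$ on $T^+$ with the walk $\sigma({D'}^+)$ on the reduced tableau ${T'}^+$, exactly as in the Fuss case $m=kn+1$ of Garsia--Xin. That induction is what substitutes for your two lemmas: it never needs to identify, cell by cell, where the fractional parts jump. If you want to complete your direct approach, you need a global statement describing how the ranks of $\overline{D^+}$ distribute over the columns of $T^+$ (the $\frac{1}{n}$-corrected analogue of the Ranking Algorithm, which the paper introduces only for the $\k$-case), and the natural way to prove such a statement is again an induction that removes a column — at which point the two arguments essentially merge. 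So the skeleton you propose is viable (your two lemmas do hold, and can be confirmed on the paper's running example), but as submitted the proposal reduces the theorem to two unproven assertions that carry all of its content.
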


The proof is almost the same, so we only sketch the idea and point out the difference. Suppose $b_i=\min(b(T))$, i.e., the smallest bottom entry is in the $i$-th column. Then this column has no
bold faced entry. (In $\TAU^k_n$, this $i$ is always $1$.) Removing column $i$ results in a tableau ${T'}^+$, which still satisfies the $abcd$ condition. Then Walking Algorithm$^+$ applies to ${T'}^+$
to give $\sigma(D'^+)$ (regard the entries as contiguous just like the Fuss case). The similarity between $\sigma(D^+)$ and $\sigma(D'^+)$ can be used to give the proof.
See Figure \ref{fig:Walking Algorithm^+} for an example.

\begin{figure}[!ht]
$$
\hskip 1.0in T^+= \hskip -2.65in \vcenter{ \includegraphics[height=1.5 in]{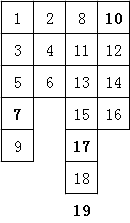}}
\hskip -1.4in {T'}^+= \hskip -2.65in \vcenter{ \includegraphics[height=1.5 in]{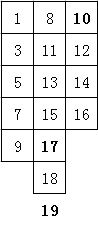}}
$$
$$
\hskip -0.3in\sigma(D^+)= \begin{array}{ccccccccccccccccccc}
\!\!\!1&\!\!10 &\!\!\!17 & \!\!\!15 & \!\!\!13 & \!\!\!11 & \!\!\!8 & \!\!\!19 & \!\!\!18 & \!\!\!16 & \!\!\!14&\!\!\!12 & \!\!\!9 &\!\!\! {\color{red}\mathbf{6}} & \!\!\!{\color{red}\mathbf{4}}& \!\!\!{\color{red}\mathbf{2}}& \!\!\!7& \!\!\!5& \!\!\!3\\
\end{array}
$$
$$
\hskip -0.3in\sigma({D'}^+)= \begin{array}{ccccccccccccccccccccc}
\!\!\!1&\!\!\!10 &\!\!\!17 &\!\!\!15 &\!\!\!13 &\!\!\!11 & \!\!\!8 & \!\!\!19 & \!\!\!18 & \!\!\!16 & \!\!\!14&\!\!\!12 & \!\!\!9& & & &7& \!\!\!5& \!\!\!3\\
\end{array}
$$
\caption{Walking Algorithm$^+$ applies to $T^+$ to give $\sigma(D^+)$, and Walking Algorithm$^+$ applies to $T'^+$ which is $T^+$ removing the column $2$ to give $\sigma(D'^+)$.\label{fig:Walking Algorithm^+}}
\end{figure}

\medskip
The situation for $\CD_{\k^-}$ is similar. Recall that $ D^-\in \CD_{\k^-}$ only when $D\in \CD_{\k}^\circ$, i.e., the rank sequence $r(D)$
has only one $0$ at $r_1=0$. It is not hard to see that the filling algorithm takes such $D$ to
$$\TAU_{\k}^- = \{ T\in \TAU_{\k}: t_i < k_1+\cdots +k_{i-1}+i \text{ for } i\ge 2\} ,$$
where $t_i$ denotes the top entry in the $i$-th column of $T$.

\begin{algo}[Walking Algorithm$^-$]
\label{al-Walking Algorithm^-}

\noindent
Input: A tableau $T^-=T(D^-)\in \TAU_{\k}^-$.

\noindent
Output: A permutation $\sigma(D^-)$ through walking on $T^-$.

\begin{enumerate}
  \item Write in bold  all the entries in $T^-$ that are by $1$ less than a bottom row entry.
\item Go to $1$ and write $1$.
\item If you are in  row  $1$ go down the column to the bottom. If the entry there is $r$  go to  $r-1$   and write  $r-1$.
\item If you are not in the first row go up the column one row. If the entry there is $r$ and is not bold write $r$.
\item If the entry there  is $r$ and bold go to $r+1$ and continue until
you run into a normal entry, then write it.

\end{enumerate}
\end{algo}

\begin{theo}For a Dyck path $D^-\in \CD_{\k^-}$, the permutation $\sigma(D^-)$ that rearranges the letters of $\SW(D^-)$ in the order that gives the $SW$ word of $\overline{D^-}$, is obtained by
the Walking Algorithm$^-$.
\end{theo}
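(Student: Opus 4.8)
The plan is to prove the statement by the same inductive strategy that underlies Walking Algorithm$^+$ (and, through \cite{Fuss-case}, the Fuss case), carrying through the sign reversals that distinguish the two walks. Throughout I use that $T(D^-)=T(D)\in\TAU_{\k}^-$ for the unique $D\in\CD_{\k}^\circ$ with the prescribed $D^-$, and that $\SW(\overline{D^-})$ is recovered from $\sigma(D^-)$ by the stated recipe; thus it suffices to show that the walk on $T^-$ outputs the permutation realizing the $\k^-$-sweep order on the steps of $D^-$.

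First I would record the effect of the $-\tfrac1n$ perturbation on the sweep. If a step of $D^-$ is preceded by $p$ up steps, its starting level equals the integer level of the corresponding step of $D\in\CD_{\k}$ minus $p/n$; hence within a fixed integer level the $\k^-$-sweep visits steps in order of \emph{decreasing} $p$, exactly reversing the within-level order of the $\k^+$-sweep. The hypothesis $D\in\CD_{\k}^\circ$ (a single $0$ in $r(D)$) is precisely what prevents this downward perturbation from creating ties across the bottom level, so that the order is well defined. This reversal is what the changed conventions of Walking Algorithm$^-$ encode: a bold entry is now one \emph{less} than a bottom entry, a descent jumps to $r-1$ instead of $r+1$, and a bold entry is skipped by moving to $r+1$ instead of $r-1$. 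A first consistency check, matching the accounting of steps, is that the walk never writes the maximal entry $n+|\k|$ (it is only ever reached as a descent target and jumped past), mirroring the removal of the final $W$ in passing from $D$ to $D^-$; this is the dual of the extra bottom entry created in the $+$ case.

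Next I would set up the induction on the number $n$ of columns by peeling off the column whose bottom entry is minimal, just as in the $+$ argument. One checks that the resulting tableau ${T'}^-$, with entries relabelled to be contiguous, again satisfies the $abcd$ condition and the strict top-row inequalities $t_i<k_1+\cdots+k_{i-1}+i$ defining $\TAU_{\k'}^-$; it is here that the single-zero condition is consumed, a point with no counterpart in the $+$ case. By the inductive hypothesis Walking Algorithm$^-$ applied to ${T'}^-$ produces $\sigma({D'}^-)$, and the comparison of the two walks shows that $\sigma(D^-)$ is obtained from $\sigma({D'}^-)$ by splicing the entries of the removed column back into their walk positions.

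The main obstacle is this splice comparison, which is genuinely more delicate than in the $+$ case. Under the $+$ rule the minimal-bottom column is automatically free of bold entries, so removing it leaves the bold structure of the other columns untouched; under the $-$ rule this fails, since the bottom $b_i$ of the peeled column is itself bold whenever $b_i+1$ is another bottom entry. I would therefore have to show that reinserting the peeled column perturbs the walk only locally even when its bottom is bold: the descent into the column and the subsequent ascent must thread the rule-$5$ jumps $r\mapsto r+1$ so that every reinserted entry is written in exactly the slot dictated by the reversed within-level order established above. Once this locality is verified, together with the base case and the automatic exclusion of $n+|\k|$, correctness of Walking Algorithm$^-$ follows.
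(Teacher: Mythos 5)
Your proposal takes essentially the same route as the paper's own proof: peel off the column containing the minimal bottom entry $b_i$, apply Walking Algorithm$^-$ inductively to the reduced tableau ${T'}^-$ (entries regarded as contiguous), and compare the two walks, the one genuine subtlety being that in the $-$ case the peeled column can contain a bold entry, which is necessarily the bottom entry $b_i$ itself (occurring exactly when $b_i=b_j-1$ for another bottom entry $b_j$). This is precisely the decomposition and the delicate point the paper identifies, and since the paper likewise leaves that final comparison at the level of a sketch, your plan matches it in both approach and degree of completeness.
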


The idea of the proof is similar to that of the $\k^+$-case. We only point out the difference. Suppose $b_i=\min(b(T))$, i.e., the smallest bottom entry is in the $i$-th column. (In $\TAU^k_n$, this $i$ is always $1$.)
Removing column $i$ results in a tableau ${T'}^-$, which still satisfies the $abcd$ condition. Then Walking Algorithm$^-$ applies to ${T'}^-$
to give $\sigma(D'^-)$ (regard the entries as contiguous just like the Fuss case). The similarity between $\sigma(D^-)$ and $\sigma(D'^-)$ can be used to give the proof. It is possible that column $i$ has a bold faced entry.
Then this bold faced entry must be the bottom entry $b_i$: i) the smallest bold faced entry $b_i-1$ is not in this column; ii) $b_i$ is bold faced only when $b_i=b_j-1$ for some $j$. This case also cause no trouble.
See Figure \ref{fig:Walking Algorithm^-} for an example.

\begin{figure}[!ht]
$$
\hskip 1.0in T^-= \hskip -2.65in \vcenter{ \includegraphics[height=1.5 in]{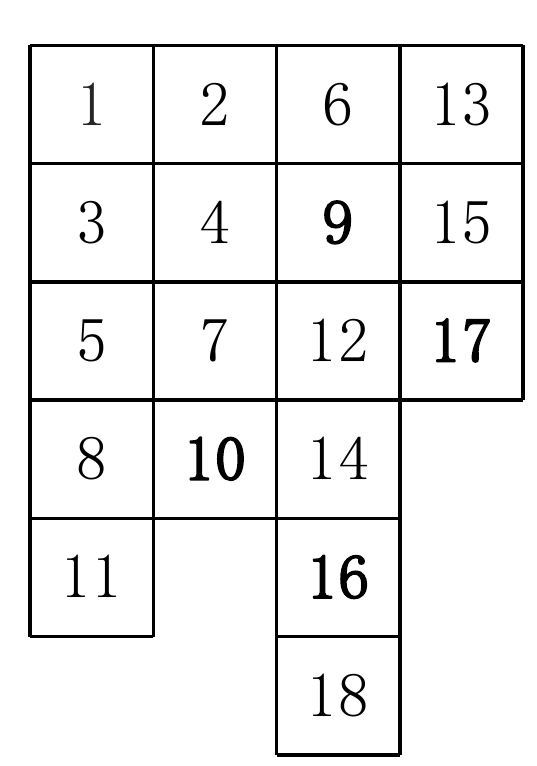}}
\hskip -1.4in T'^-= \hskip -2.65in \vcenter{ \includegraphics[height=1.5 in]{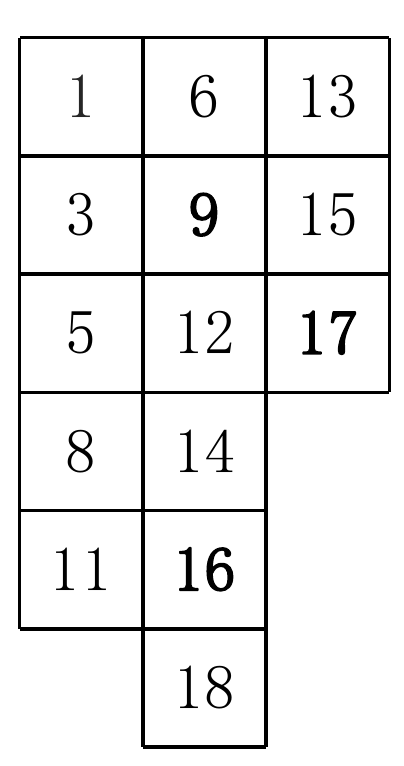}}
$$
$$
\hskip -0.3in\sigma(D^-)= \begin{array}{ccccccccccccccccc}
\!\!\!1&\!\!\!{\color{red}10} &\!\!\!{\color{red}7} & \!\!\!{\color{red}4} &\!\!\!{\color{red}2} & \!\!\!9 & \!\!\!6 & \!\!\!17 & \!\!\!15 & \!\!\!13 & \!\!\!16&\!\!\!14& \!\!\!12& \!\!\!11&\!\!\!8& \!\!\!5&\!\!\!3\\
\end{array}
$$
$$
\hskip -0.3in\sigma(D'^-)= \begin{array}{ccccccccccccccccc}
\!\!\!1 & & & &  & 9 & \!\!\!6 & \!\!\!17 & \!\!\!15 & \!\!\!13 & \!\!\!16&\!\!\!14& \!\!\!12& \!\!\!11&\!\!\!8& \!\!\!5&\!\!\!3\\
\end{array}
$$
\caption{Walking Algorithm$^-$ applies to $T^-$ to give $\sigma(D^-)$, and Walking Algorithm$^-$ applies to $T'^-$ which is $T^-$ removing the column $2$ to give $\sigma(D'^-)$.\label{fig:Walking Algorithm^-}}
\end{figure}

\subsection{The walking algorithm for $\vec{\mathbf{k}}$-Dyck paths}

After extending Garsia-Xin's idea for $\k^\pm$-Dyck paths, one might think the inverting algorithm will be similar for $\k$-Dyck paths. It turns out that the situation is quit different.

The new walking algorithm not only relies on the intermediate tableau $T=T(D)\in \TAU_{\k}$, but also on the \emph{rank tableau} $R(D)$ constructed from $T$.

It is convenient to call numbers in $T$ indices. We will use a ranking algorithm  to construct the rank tableau $R(D)$ of $T$.
For clarity, we start with the empty tableau of the shape of $T$, and successively \emph{assign} each index a rank.
By assigning index $A$ a rank $r$, we mean to fill $r$ into the box in $R(D)$ corresponding to index $A$ in $T$.

\begin{algo}[Ranking Algorithm]
\label{al-Ranking Algorithm}

\noindent
Input: A tableau $T=T(D )\in \TAU_{\k}$.

\noindent
Output: A rank tableau $R(D)$ of the same shape with $T$.

\begin{enumerate}
\item  Successively assign $0, 1, 2, ..., k_1$ to the first column indices of $T$ from top to bottom;

\item For $i$ from $2$ to $n$, if the top index of the $i$-th column is $A+1$, and the rank of index $A$ is $a$, then assign the index $A+1$ rank $a$. Moreover,
the ranks in the $i$-th column are successively $a,a+1,\dots, a+k_i$ from top to bottom.

%
%

\end{enumerate}
\end{algo}


\noindent
 See Figure \ref{fig:Ranking Algorithm} for an example of the Filling algorithm.
\begin{figure}[!ht]
$$
\hskip .8in T(D)=\hskip -2.55 in \vcenter{ \includegraphics[height=1.6in]{Tanytableaux.png} }
\hskip -2.5in \qquad \Longrightarrow \qquad
\hskip 0.05in
{R(D )=\hskip-2.5in \vcenter{\includegraphics[height=1.5in]{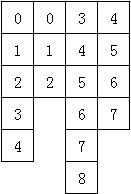} }}
$$
\caption{The tableau $T(D)$ in Figure \ref{fig:Filling Algorithm} and its ranking tableau.\label{fig:Ranking Algorithm}}
\end{figure}

Observe that the indices are distinct, but the ranks are not. The largest rank $r$ (entry) is the rank $r$  with the
largest index. For instance in 
Figure \ref{fig:Ranking Algorithm},
the rank $2$ entries have indices $5,6$, so the largest rank $2$ entry is the rank $2$ with index $6$, whose box is located in row $3$ column $2$. Similarly, the smallest rank $2$ entry is the rank $2$ with index $5$, whose box is located in row $3$ column $1$.

We find a way to obtain the permutation $\sigma(D)$ directly from $T(D)$ and
$R(D)$ as follows.
\begin{algo}[Walking Algorithm]
\label{al-Walking Algorithm}

\noindent
Input: The index-rank tableau $(T,R)$ with $T=T(D )\in \TAU_{\k}$ and $R=R(D)$.

\noindent
Output: A permutation $\sigma(D)$ through walking on $(T,R)$.

\begin{enumerate}
\item In $R(D)$, go to the largest rank $0$ entry. Mark this rank  and write down its index;

\item Repeat the following steps until no unmarked rank can be selected.
\begin{enumerate}
  \item If we are in row $1$, then go to the bottom row in the same column. If the rank there is $r$ then go to the largest unmarked rank $r$. Mark this rank and write down its index;
\item
 If we are not in row $1$, then go up one box. If the rank there is $r$ then go to the largest unmarked rank $r$. Mark this rank and write down its index.
\end{enumerate}
\end{enumerate}

\end{algo}

\begin{theo}
  \label{t-I.3}

For a Dyck path $D \in \CD_{\k}$, the permutation $\sigma(D )$ that rearranges the letters of $\SW(D)$ in the order that gives the $SW$ word of $\oD$, is obtained by the Walking Algorithm.
\end{theo}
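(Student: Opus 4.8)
The plan is to show that the Walking Algorithm (Algorithm \ref{al-Walking Algorithm}) reads off, in order, the positions in $\SW(D)$ of the successive steps of $\oD$; that is, the $j$-th index it writes is the position occupied in the sweep-sorted word $\SW(D)$ by the $j$-th step of $\oD$. Once this is established, the reconstruction rule (place $S^{k_i}$ over each $t_i$ and $W$ over every other entry) rebuilds $\SW(\oD)$ verbatim, since the sweep permutes the up steps but preserves their lengths. The argument rests on two pillars: (A) a rank lemma identifying the entries of $R(D)$ with the starting levels of $\oD$, and (B) a verification that each elementary move of the walk advances from one step of $\oD$ to the next.

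First I would prove (A): writing $\rho_1\le\rho_2\le\cdots$ for the starting levels of $\oD$ listed in sweep order, I claim the Ranking Algorithm (Algorithm \ref{al-Ranking Algorithm}) assigns to the index in sweep-position $i$ exactly $\rho_i$. The column-top rule is justified by the observation that in a sweep image an up step never opens a new level: the rightmost visit of $\oD$ to any level $\ell\ge 1$ must be followed by a down step (an up step there would force a later return to $\ell$), so whenever the rank strictly increases in sorted order the step there is a down step; contrapositively, an up step in sweep-position $i\ge 2$ satisfies $\rho_i=\rho_{i-1}$, matching the assignment $R(\text{top})=R(\text{previous index})$. For the within-column increments and the global consistency I would run an induction on the Filling Algorithm (Algorithm \ref{al-Filling Algorithm}), maintaining the invariant that the active entries, listed by increasing index, carry non-decreasing ranks spanning a range of at most $1$, with the smallest-index active entry sitting exactly one level below the step currently being placed. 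This forces $R$ to be a non-decreasing integer sequence with unit increments running from $0$ to the height of $\oD$; comparing its level multiset with that of the starting levels of $\oD$ (every level is a starting level because the down steps have length $1$) then yields $R=\rho$.

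Granting (A), pillar (B) is a step-by-step check. Suppose the walk has correctly emitted the first $j$ steps of $\oD$ and sits at step $j$. If step $j$ is an up step of length $k_i$ starting at level $a$ (so its index is the row-$1$ entry $t_i$), its column carries ranks $a,a+1,\dots,a+k_i$, so descending to the column bottom reads the level $a+k_i$, which is precisely the starting level of step $j+1$; if step $j$ is a down step at level $r$ (an entry in row $\ge 2$), moving up one box reads level $r-1$, again the starting level of step $j+1$. In either case the walk jumps to the largest unmarked entry of the target level, and this is correct because among steps sharing a level the sweep's right-to-left tie-break makes the leftmost-in-$\oD$ step carry the largest index; since the walk proceeds left-to-right along $\oD$, the next step is the leftmost unvisited one, i.e.\ the largest unmarked entry. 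Finally, the walk starts at the largest rank-$0$ entry, which is the first step of $\oD$ (all rank-$0$ steps are up steps, and leftmost means largest index), and it halts exactly when the rank-$0$ entries are exhausted, after all $|\k|+n$ steps have been written, so $\sigma(D)$ is a genuine permutation.

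The main obstacle is the within-column and global half of the rank lemma (A): showing that the purely combinatorial ``smallest active entry'' rule of the Filling Algorithm reproduces the true descent levels of $\oD$, i.e.\ that the entry beneath which a down step is threaded always lies exactly one level lower. This is the precise point at which the rank tableau becomes indispensable and where the $\k$-case genuinely departs from the $\k^\pm$-cases. An alternative route is to mirror the inductive scheme used there, deleting the column of the smallest bottom entry $b_i=\min(b(T))$ to pass from $(T,R)$ to a smaller pair $(T',R')$ and relating $\sigma(D)$ to $\sigma(D')$; the delicate part then becomes verifying that $R$ restricts to the correct rank tableau $R'$ after the deletion and that the deleted entries reinsert into $\sigma(D)$ in the predicted places.
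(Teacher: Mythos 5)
Your architecture runs in the opposite direction from the paper's, and its load-bearing step is missing. You presuppose the preimage $\oD$ and try to show the walk traces it left to right; for that you need pillar (A), the identification of the rank tableau $R(D)$ with the sweep-sorted level sequence of $\oD$. Your justification of the column-top rule (the rightmost step of $\oD$ starting at a level $\ell\ge 1$ must be a down step, hence an up step in sweep position $i\ge 2$ has $\rho_i=\rho_{i-1}$) is correct. But the within-column half --- that the Filling Algorithm's ``smallest active entry'' rule always threads a $W$ exactly one level below the entry it lands under, so that the unit rank increments down each column reproduce the true levels of $\oD$ --- is only asserted through an invariant you never establish, and you yourself flag it as ``the main obstacle.'' That invariant is not routine bookkeeping: it is the entire combinatorial content of the theorem (given (A), your pillar (B) is an easy induction), so what you have is a reduction of the theorem to an unproved lemma of essentially the same depth. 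Your fallback suggestion (delete the column of $\min(b(T))$ and mirror the $\k^\pm$ arguments) is likewise only a plan, and the paper explicitly warns that the $\k$-case is where that scheme breaks down.

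For contrast, the paper never needs (A) as an a priori statement about a pre-existing preimage. It takes the walk's output $\sigma(D)$, \emph{defines} $\oD$ by putting $S^{k_i}$ over each top-row index $t_i$ and $W$ over every other index, and then observes two things: the walk's rank moves ($+k_i$ after a row-1 entry, $-1$ otherwise) force the level sequence of this constructed word to coincide with the walk's rank sequence (Fact 1, automatic), and the largest-unmarked-index rule together with the monotonicity of ranks in Lemma \ref{1-1.4} makes the sweep of the constructed path return $\SW(D)$ with the correct right-to-left tie-breaking (Fact 2). The only hard ingredient is Lemma \ref{l-length-sigma(D)}, that the walk writes all $|\k|+n$ indices before halting, which the paper proves twice: by a counting argument in the equal-parameter case, and in general by an Eulerian-type argument on the balanced digraph $G_R$. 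Note that if you did prove (A), your route would deliver Lemma \ref{l-length-sigma(D)} for free, since the walk would follow $\oD$ to its final step --- but that only underlines that all the difficulty of the theorem has been concentrated into the step you skipped. A secondary logical point: because you assume $\oD$ exists, your argument as stated proves injectivity of the sweep map, and surjectivity must then be inferred from finiteness of $\CD_{\mathcal{K}}$ plus the known fact that the sweep map sends $\CD_{\mathcal{K}}$ into itself, whereas the paper's construction exhibits a preimage for every $D$ directly.
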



Let us apply Walking Algorithm (Algorithm \ref{al-Walking Algorithm}) to the index-rank tableau $(T,R)$ in Figure $\ref{fig:Ranking Algorithm}$.
The permutation $\sigma(D )$ is given on the top row. From it one easily produce the middle $\SW(\overline{D})$, whose rank sequence is given in the third row for comparison. Now we clearly see that
the ranks of $\overline{D}$ are exactly the ranks in $R(D)$.
$$
\hskip 0.1in
\left( \begin{array}
  {c}
  \sigma(D) \\ \SW(\overline{D}) \\  r(\overline{D})
\end{array}\right)=
\left(\begin{array}{cccccccccccccccccc}
\!\!\!2&\!\!\!\!\!\!\!\!6 &\!\!\!\!\!\!\!\!4 & \!\!\!\!\!\!\!1 & \!\!\!\!\!\!\!11 & \!\!\!\!\!\!\!8 & \!\!\!\!\!\!\!18 & \!\!\!\!\!\!\!17 & \!\!\!\!\!\!\!15 & \!\!\!\!\!\!13 & \!\!\!\!\!\!10&\!\!\!\!\!\!16 & \!\!\!\!\!\!14& \!\!\!\!\!\!12& \!\!\!\!\!\!9& \!\!\!\!\!\!7& \!\!\!\!\!\!5& \!\!\!\!\!\!3\\
S^{2}&\!\!\!\!\!W&\!\!\!\!\!W&\!\!\!\!\!S^{4}&\!\!\!\!\!W &\!\!\!\!\!S^{5}&\!\!\!\!\! W& \!\!\!\!\!W & \!\!\!\!\!W &\!\!\!\!\!W &\!\!\!\!S^{3}&\!\!\!\!\! W& \!\!\!\!\!W & \!\!\!\!\!W&\!\!\!\!\!W & \!\!\!\!\!W& \!\!\!\!W&\!\!\!\!W\\
\!\!\!0&\!\!\!\!\!\!\!\!2 &\!\!\!\!\!\!\!\!1 & \!\!\!\!\!\!\!0 & \!\!\!\!\!\!\!4 & \!\!\!\!\!\!\!3 & \!\!\!\!\!\!\!8 & \!\!\!\!\!\!\!7 & \!\!\!\!\!\!\!6 & \!\!\!\!\!\!5 & \!\!\!\!\!\!4&\!\!\!\!\!\!7 & \!\!\!\!\!\!6& \!\!\!\!\!\!5& \!\!\!\!\!\!4& \!\!\!\!\!\!3& \!\!\!\!\!\!2& \!\!\!\!\!\!1\\
\end{array}\right).
$$

$$
\hskip 0.15in{
{\oD=\hskip -.4in\vcenter{\includegraphics[width=2.53in]{1.png} } }
\hskip -3.3in
{(T,R)=
\hskip -0.3in
 \vcenter{\includegraphics[width=0.88in]{Tanytableaux.png} }}
\hskip -5.2in
 \vcenter{\includegraphics[width=.88in]{TanytableauxR.png} }}
$$

The paper is organized as follows. In this introduction, we have introduced the main concepts, as well as our main results, especially Theorems \ref{t-I.3} which give linear inverting algorithms
for $\k$-Dyck paths. Section \ref{s-basic} includes the basic facts of the sweep map and the proof of Theorem \ref{t-I.3}, but leave the proof of Lemma \ref{l-length-sigma(D)} in Section \ref{proof}. Lemma \ref{l-length-sigma(D)} says
that our Walking algorithm produces a permutation $\sigma(D)$ of the desired length. To prove this lemma, we give two different approachs in Section \ref{proof}.
\section{Some basic auxiliary facts about the sweep map\label{s-basic}}

There are a number of auxiliary properties of the sweep map in $\k$-Dyck paths that need to be established to prove our basic results.

\begin{lem}
  \label{1-1.2}
The Filling Algorithm terminates only when all indices $1,2,\ldots ,|\k|+n$ have been placed.
\end{lem}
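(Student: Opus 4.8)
The plan is to prove that the Filling Algorithm never stalls: at every stage the prescribed move is legal, so all $|\k|+n$ letters of $\SW(D)$ get processed. Since $\SW(D)$ consists of exactly $n$ up-letters $S^{k_1},\dots,S^{k_n}$ together with $|\k|$ down-letters $W$, a total of $|\k|+n$ letters, and since the algorithm places exactly one index per letter, processing every letter is the same as placing every index $1,2,\dots,|\k|+n$. There are only two ways the algorithm could fail: reading an $S^*$ with no empty column available, or reading a $W$ with no active entry available. So the whole task reduces to ruling out these two failures.

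First I would dispose of the $S^*$ case, which is immediate. Columns are opened from left to right, one per up-letter, and a tableau in $\TAU_\k$ has exactly $n$ columns, in bijection with the $n$ up-letters. Hence when the $j$-th up-letter $S^{k_j}$ is read, columns $1,\dots,j-1$ have already been opened by the earlier up-letters, and column $j$ (whose length is correctly $k_j+1$) is the first empty one, so the move succeeds. The substance of the proof is the $W$ case, for which I would establish the invariant
\[
\#\{\text{empty cells in the opened columns after the first } i-1 \text{ letters}\} = r_i ,
\]
where $r_i$ is the starting rank of the $i$-th step of $D$. The verification is a step-by-step check: opening a column via $S^{k_j}$ creates $k_j+1$ cells and immediately fills one, a net gain of $k_j$ empty cells, matching the rank jump $+k_j$; placing a $W$ fills exactly one empty cell, matching the rank drop $-1$. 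Both quantities start at $0$, so they agree after every letter.

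With the invariant in hand the conclusion is forced by the Dyck condition. If the $i$-th letter is a $W$, then the $i$-th step is a down step, so $r_{i+1}=r_i-1\ge 0$ gives $r_i\ge 1$; by the invariant there is then at least one empty cell among the opened columns, i.e.\ an opened column that is not yet full, and the bottom entry of such a column is by definition \emph{active}. Thus a smallest active entry always exists and the $W$-move succeeds. Since the algorithm never stalls, it consumes all $|\k|+n$ letters and places all indices; and because $D$ ends at height $0$, the invariant (read at the final state) shows that no empty cell remains, so every column is full and the output is a genuine $T\in\TAU_\k$. I expect the only real obstacle to be isolating and proving the correct invariant (empty cells $=$ rank); once that identity is in place, the Dyck property and the definition of \emph{active} do the rest with no computation.
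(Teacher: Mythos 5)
Your proof is correct. There is, however, nothing in the paper to compare it with: Lemma \ref{1-1.2} is stated without any proof (the authors list it among the basic auxiliary facts, the equal-parameter analogue going back to the cited Garsia--Xin Fuss-case paper), so your argument supplies a justification the paper omits. The structure is right: the only conceivable failures are reading an $S^*$ with no empty column or reading a $W$ with no active entry. The first is impossible because the $n$ up-letters of a path in $\CD_{\k}$ open the $n$ columns in order, the $j$-th up-letter $S^{k_j}$ opening column $j$ of length $k_j+1$. The second is ruled out by your invariant that the number of empty cells in the opened columns after $i-1$ letters equals the rank $r_i$; this holds by a trivial induction ($+k_j$ when column $j$ is opened, $-1$ per $W$), and combined with the Dyck condition $r_{i+1}=r_i-1\ge 0$ it yields a not-yet-full opened column, whose bottom entry is active in the sense of step (4) of Algorithm \ref{al-Filling Algorithm}. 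One small notational point: when the $W$ in question is the last letter, the quantity you call $r_{i+1}$ is the terminal height of the path rather than an entry of the paper's rank sequence $r(D)$, but it still equals $0$ because the path ends on the axis, so the inequality $r_i\ge 1$ you need holds there as well. Your closing observation (zero empty cells at the end, hence every column exactly full) goes slightly beyond the bare statement of the lemma, and is exactly what one needs to see that the output has the column lengths required of a tableau in $\TAU_{\k}$.
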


\begin{lem}
  \label{1-1.3}
The Ranking Algorithm \ref{al-Ranking Algorithm} assigned every index a rank.
\end{lem}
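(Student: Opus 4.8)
The plan is to prove the statement by induction on the column index, the real content being that the Ranking Algorithm is never forced to consult the rank of an index that has not yet been assigned one. Since $R(D)$ has the same shape as $T$ and every index of $T$ lies in exactly one column, it suffices to show that whenever the algorithm reaches the $i$-th column it can actually carry out the assignment prescribed by Algorithm \ref{al-Ranking Algorithm}; for then, after all $n$ columns have been processed, every box of $R(D)$, equivalently every index of $T$, has received a rank, and (since each index is ranked exactly once, when its own column is processed) no conflict can arise.

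The one fact the algorithm needs in order to proceed past column $1$ is the following: when $i\ge 2$ and the top index of the $i$-th column is $t_i=A+1$, the index $A=t_i-1$ must already be ranked. I would first note that $A$ is a genuine index: since $t_1=1<t_2<\cdots<t_n$, we have $t_i\ge i\ge 2$ for $i\ge 2$, so $A=t_i-1\ge 1$. The key claim is then that $A$ lies in a column strictly to the left of column $i$. Indeed, the top row $t(T)=(t_1,\dots,t_n)$ is strictly increasing and every column of $T\in\TAU_{\k}$ increases from top to bottom, so each entry of column $j$ is at least its top entry $t_j$. Hence for every $j\ge i$ all entries of column $j$ are $\ge t_j\ge t_i>t_i-1=A$, which means $A$ cannot occur in column $i$ or in any later column. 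By Lemma \ref{1-1.2} the index $A\in\{1,\dots,|\k|+n\}$ does appear somewhere in $T$, so it must appear in some column $j<i$.

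Granting the claim, the induction is immediate. Step (1) of the Ranking Algorithm ranks the whole first column, giving the base case. Assuming columns $1,\dots,i-1$ have all been ranked, the claim places the consulted index $A=t_i-1$ in one of these columns, so its rank $a$ is already defined; step (2) then assigns rank $a$ to $A+1$ and fills the $i$-th column with $a,a+1,\dots,a+k_i$ from top to bottom, ranking it completely. This advances the induction, and after column $n$ every index of $T$ carries a rank. The only step with genuine content is the claim that the consulted index $t_i-1$ precedes column $i$, which is precisely where the defining monotonicity of $\TAU_{\k}$ (increasing rows and columns) combines with the completeness of the filling from Lemma \ref{1-1.2}; I do not anticipate any further obstacle, since the column-by-column structure guarantees each index is ranked once and only once.
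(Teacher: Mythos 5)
Your proof is correct, and its skeleton matches the paper's: both arguments reduce the lemma to the single claim that the consulted index $t_i-1$ lies in a column strictly to the left of column $i$, so that (by induction on columns in your version, by a minimal-counterexample argument in the paper's) its rank is already available when column $i$ is processed. The difference lies in how that claim is justified. The paper disposes of it in one line by appealing to the dynamics of the Filling Algorithm: indices are placed in increasing order and new columns are opened left to right, so when $t_i$ was placed at the top of column $i$, columns $i,i+1,\dots,n$ were otherwise empty, forcing the earlier-placed index $t_i-1$ into columns $1,\dots,i-1$. You instead argue statically, from properties of the finished tableau: columns increase downward and the top row increases left to right, so every entry in columns $i,\dots,n$ is at least $t_j\ge t_i>t_i-1$, and completeness of the filling (Lemma \ref{1-1.2}) then forces $t_i-1$ into an earlier column. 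Your version is marginally more self-contained, since it applies to any tableau with the monotonicity properties of $\TAU_{\k}$ without re-tracing the algorithm's run, at the cost of a few extra lines; both arguments are fully rigorous.
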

\begin{proof}
Assume to the contrary that $i$ with $2\leq i\leq n$ is the smallest such that the top index of the $i$-th column, say $A>1$,
cannot be assigned a rank. This only happens when the index $A-1$ is not assigned a rank yet. But then $A-1$ must not belong to the first $i-1$ columns, contradicting the Filling Algorithm.
\end{proof}

\begin{lem}
  \label{1-1.4}
Let $D\in \CD_{\k}$ be a $\k$-Dyck path with rank tableau $R(D)$. Then the ranks are weakly increasing according to their indices in $T(D)$. In other words,
if indices $1,2, \ldots, n+|\k|$ are assigned ranks $r_1,r_2,r_3,\ldots, r_{n+|\k|}$, then $0=r_1\le r_2 \le r_3 \le \ldots \le r_{|\k|+n}.$
More precisely, $r_{i}-r_{i-1}$ is either $0$ or $1$ for each $2\leq i\leq |\k|+n$.
\end{lem}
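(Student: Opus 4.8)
The refined assertion $r_i-r_{i-1}\in\{0,1\}$ is stronger than the monotonicity $0=r_1\le r_2\le\cdots$ and immediately implies it, so the plan is to prove this local statement directly, running the Filling and Ranking Algorithms in parallel and arguing by induction on the index $i$.

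First I would split on how index $i$ is created by the Filling Algorithm. If the $i$-th letter of $\SW(D)$ is an $S$, then $i$ is placed at the top of a fresh column, necessarily some column $\ell\ge 2$ (the top of column $1$ is the index $1$), so $i=t_\ell$. Step (2) of the Ranking Algorithm then reads $r_{t_\ell}=r_{t_\ell-1}$, i.e. $r_i=r_{i-1}$, and the difference is $0$. If instead the $i$-th letter is a $W$, then $i$ is placed immediately below the smallest \emph{active} entry $p$; since $p$ lies directly above $i$ in the same column, $r_i=r_p+1$. Thus everything comes down to locating $r_{i-1}$ relative to $r_p$: it suffices to prove $r_{i-1}\in\{r_p,r_p+1\}$.

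To control this I would carry an invariant along the Filling Algorithm that records the ranks of the current active entries. After $1,\dots,m$ have been placed, list the active entries by increasing index as $p_1<\cdots<p_s$ and write $\rho_1,\dots,\rho_s$ for their ranks. The invariant $\mathcal I(m)$ reads: (a) $\rho_1\le\cdots\le\rho_s$; (b) $\rho_s-\rho_1\le 1$; and (c) $\rho_s\le r_m\le\rho_s+1$, with $r_m=\rho_s+1$ permitted only when $\rho_1=\rho_s$ (a flat list). Granting $\mathcal I$, the $W$-case closes at once: by (a) the smallest active entry $p=p_1$ also carries the smallest rank, so $r_i=\rho_1+1$, while (c) for $m=i-1$ combined with (b) forces $r_{i-1}\in\{\rho_1,\rho_1+1\}$ (if $r_{i-1}=\rho_s$ use $\rho_1\le\rho_s\le\rho_1+1$; if $r_{i-1}=\rho_s+1$ then $\rho_s=\rho_1$). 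Hence $r_i-r_{i-1}\in\{0,1\}$.

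It then remains to establish $\mathcal I(m)$ by induction, the base $m=1$ being trivial since $\rho_1=r_1=0$. The inductive step again divides on whether the $m$-th letter is $S$ or $W$, and, for a $W$, on whether the new entry $m$ remains active or completes (and hence leaves) its column. Using the two rank relations above, each update of the list $\rho_1,\dots,\rho_s$ is mechanical: an $S$ appends $r_m=r_{m-1}\ge\rho_s$ at the large-index end, preserving (a) and (b); a $W$ deletes the minimal entry $p_1$ and, when $m$ stays active, appends $\rho_1+1$, again keeping the span at most $1$. I expect the one genuinely delicate point to be clause (c) in the case where $m$ \emph{fills} a column, so that $m$ is not active and $r_m$ is not itself one of the surviving active ranks; here one must check that if the old list was flat the new value lands in the exceptional slot $r_m=\rho_s+1$, whereas if the old span was $1$ it lands in the normal slot $r_m=\rho_s$. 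This is exactly the configuration that occurs when the neighbour $i-1$ is not active, and it is what guarantees that the bound $r_{i-1}\in\{\rho_1,\rho_1+1\}$ used above survives even though $i-1$ has already left the active set. Finally I would record that a $W$ always has an active entry to descend from (the path never dips below the axis, so by Lemma \ref{1-1.2} the Filling Algorithm never stalls), and that whenever the active set momentarily empties the next letter is forced to be an $S$, so the invariant simply restarts.
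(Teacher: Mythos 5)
Your proof is correct, but it takes a genuinely different route from the paper's. Both arguments are inductions, and both dispose of the easy cases identically (an $S$-letter gives $r_i=r_{i-1}$ by step (2) of the Ranking Algorithm; a $W$ placed directly under $i-1$ gives $r_i=r_{i-1}+1$); the difference is the hard case, where $i$ is a $W$ not placed under $i-1$. The paper stays pointwise: it introduces $i'$, the smallest index whose rank equals $r_{i-1}$, notes that (when $i'\ge 2$) it cannot lie in row $1$, and compares the two parent entries $p$ (above $i'$) and $q$ (above $i$); from $p<q\le i'$ and two applications of the induction hypothesis it gets $0\le r_i-r_{i-1}=r_q-r_p\le 1$. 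You instead strengthen the induction hypothesis into a running invariant of the Filling Algorithm on the active entries: ranks weakly increasing in index order, total spread at most $1$, and the last-placed rank confined to the top of that window, with the value $\rho_s+1$ allowed only over a flat window. I checked the updates you deferred as ``mechanical'' and they all close, including the delicate column-filling case, which lands exactly as you predicted: there $r_m=\rho_1+1$ equals the new top rank when the old spread was $1$, and exceeds it by $1$ (over a necessarily still-flat window) when the old window was flat; the empty-active-set restart is harmless as well. What your invariant buys is a transparent reason why jumps of size at least $2$ can never occur (active ranks never spread by more than one), and it sidesteps the one fragile spot in the paper's argument: the paper asserts the strict inequality $q<i'$, which can in fact fail --- for $\mathbf{k}=(2,1)$ and $\SW(D)=S^2\,W\,S^1\,W\,W$ the rank sequence is $(0,1,1,2,2)$, and when index $4$ is placed one finds $q=i'=2$ --- although only the weak inequality $q\le i'$ is actually used, so the paper's proof survives. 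The price of your route is bookkeeping: three clauses and several cases in place of the paper's single auxiliary index.
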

\begin{proof}

We prove by induction on $i$.

For the base case $i = 2$, we need to consider the following two cases by using the Filling algorithm \ref{al-Filling Algorithm}.
\begin{enumerate}
\item Index $2$ is in row $1$ column $2$. Then $r_2$ is assigned $0$, so $r_2-r_1=0-0=0$.
\item Index $2$ is placed under index $1$. Then $r_2$ is assigned $ 1$, so $r_2-r_1=1-0=1$.
\end{enumerate}

Now assume by induction that $0=r_1\le r_2 \le r_3 \le \ldots \le r_i$ and that $0\le r_{i}-r_{i-1}\le 1$ for $2 \leq i< |\k|+n$. We need to show that $0\leq r_{i+1}-r_i\le 1$.

There are three cases as follows.

Case 1: If index $i+1$ is in row $1$, then $r_{i+1}=r_i$;

Case 2: If index $i+1$ is placed under index $i$, then $r_{i+1}=r_i+1$;

Case 3: Otherwise, the index $i+1$ is not in row $1$ and is not placed under index $i$. We use the fact that
if the index $j$ with $j\ge 2$ is in row $1$, then $r_j=r_{j-1}$.
Let $i'$ be the smallest index with rank $r_i$. Then $r_{i'}= r_{i'+1}= \ldots = r_i$ and we need to consider the following two cases.

\begin{enumerate}
\item[(i)] If $i'\ge 2$ then it cannot be in the top row, for otherwise $r_{i'-1}=r_{i'}$ contradicting our choice of $i'$. Assume the index above $i'$ is $p$, and the index above $i+1$ is $q$. Then
$p<q<i'<i+1$ by the Filling algorithm,  and we have $r_{i'}=r_p+1,\ r_{i+1} = r_q + 1$ by the Ranking algorithm.
Thus $r_q\leq r_{i'}$ by the induction hypothesis and we obtain $r_{i+1}-r_i\leq r_{i+1}-r_q =1$.
On the other hand, $ r_{i+1}-r_i=r_q-r_p$, which is greater than or equal to $0$ (again) by the induction hypothesis.

\item[(ii)] If $i'=1$ then $0=r_1= r_2  = \cdots = r_i$. It follows that indices $1,2, \ldots, i$ are all in row $1$ and hence $i+1$ is placed under $1$, which implies $0=r_i<r_{i+1}=1$.\qedhere
\end{enumerate}\end{proof}

\begin{lem}\label{l-length-sigma(D)}
  The permutation $\sigma(D)$ produced by Algorithm \ref{al-Walking Algorithm} has length $n+|\k|$.
\end{lem}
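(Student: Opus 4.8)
The plan is to show that the Walking Algorithm marks each of the $n+|\k|$ boxes of $R(D)$ exactly once, so that $\sigma(D)$ records exactly $n+|\k|$ indices. Since at every step the walk moves to the \emph{largest unmarked} box of the target rank, no box is ever marked twice; hence it suffices to prove that the walk never halts while an unmarked box remains. First I would record how the walk moves between rank levels. Reading the rank of the current box as its level, the Ranking Algorithm shows that the $i$-th column of $R(D)$ carries exactly the contiguous ranks $a_i,a_i+1,\dots,a_i+k_i$, where $a_i$ is its top rank. Thus a step from a row-$1$ box (the top of column $i$, rank $a_i$) targets rank $a_i+k_i$ (an ``up'' of size $k_i$), while a step from a non-top box of rank $r$ targets rank $r-1$ (a ``down''). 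I set $N_r,T_r,P_r$ to be the numbers of columns $i$ with, respectively, $a_i\le r\le a_i+k_i$ (so $N_r$ is the number of rank-$r$ boxes), $a_i=r$ (top rank $r$), and $a_i+k_i=r$ (bottom rank $r$). Counting columns containing rank $r$ but not $r+1$, and conversely, yields the identity $N_r=P_r+N_{r+1}-T_{r+1}$.

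The main step is to rule out a halt at a positive target rank. The walk stops only when it targets some rank $r$ having no unmarked box. Suppose $r\ge 1$. Each target of rank $r$ is issued by a single move: an ``up'' from the top box of a column whose bottom rank is $r$ (at most $P_r$ such moves, one per column, and only if that top box is visited), or a ``down'' from a non-top box of rank $r+1$ (at most $N_{r+1}-T_{r+1}$ such moves, one per box). Hence the total number of targets of rank $r$ that can ever be issued is at most $P_r+(N_{r+1}-T_{r+1})=N_r$. But halting at rank $r$ means all $N_r$ boxes of rank $r$ have already been marked, at $N_r$ distinct successful targets, and the current move issues one more, failing target; thus $N_r+1$ targets of rank $r$ have been issued, a contradiction. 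Therefore the walk can halt only when it targets rank $0$ and finds every rank-$0$ box marked. (Rank $0$ is exceptional: the initial step of the algorithm also issues one target of rank $0$, and no ``up'' can reach rank $0$ since every $k_i\ge 1$, so $P_0=0$ and the count there is $1+(N_1-T_1)=1+N_0$, which is consistent with an honest halt.)

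It remains to show that when this final halt occurs, no box of positive rank is still unmarked; this is the delicate point, which I would settle by induction on the number $n$ of columns, mirroring the Fuss and $\k^{\pm}$ cases. For $n=1$ the single column of ranks $0,1,\dots,k_1$ is traversed directly: the walk writes the top box, drops to the bottom, and climbs back up one box at a time, marking all $k_1+1$ boxes. For the inductive step I would delete the column $i^{*}$ carrying the smallest bottom entry of $b(T)$, obtaining a tableau in $\TAU_{\k'}$ with $\k'$ equal to $\k$ with $k_{i^{*}}$ removed. The goal is to prove that the walk on $(T,R)$ is precisely the walk on the reduced pair with the $k_{i^{*}}+1$ boxes of column $i^{*}$ re-inserted at the appropriate moments, so that $\sigma(D)$ has exactly $k_{i^{*}}+1$ more entries than the reduced permutation; by induction its length is then $\big((n-1)+(|\k|-k_{i^{*}})\big)+(k_{i^{*}}+1)=n+|\k|$.

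The hard part will be the bookkeeping in this inductive step: deleting a column shifts the ranks assigned by the Ranking Algorithm and relabels the indices, so one must check that the ``largest unmarked'' choices made by the two walks stay in lockstep and that column $i^{*}$ is entered and exited as a single contiguous excursion. Choosing $i^{*}$ to have the smallest bottom entry is what makes this tractable, exactly as the analogous extremal choice does in the $\k^{\pm}$ arguments sketched earlier; the identity $N_r=P_r+N_{r+1}-T_{r+1}$ together with Lemma \ref{1-1.4} (consecutive ranks along the index order differ by $0$ or $1$) are the tools I would use to verify that the excursion is contiguous and that no rank level is over- or under-visited.
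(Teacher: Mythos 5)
Your opening counting step is correct and is essentially the paper's own first move: your identity $N_r=P_r+N_{r+1}-T_{r+1}$ is the unequal-parameter version of $n(r)=n^-(r-k)+n^{\wedge}(r+1)$ in Lemma \ref{1-1.1}(3), and your target-counting argument that the walk can only halt when it targets rank $0$ is the content of Lemma \ref{1-1.4m} (equivalently, the in/out-degree count for the balanced digraph $G_R$ in the paper's second proof). The genuine gap is everything after that: the statement that no box remains unmarked at the final halt is exactly the hard part of Lemma \ref{l-length-sigma(D)}, and you leave it as a plan. Worse, the plan would fail, because for $\k$-Dyck paths neither of its two structural claims is true. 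Take $\k=(2,1)$ and $\SW(D)=S^2\,W\,S\,W\,W$. The Filling Algorithm gives columns $(1,2,4)$ and $(3,5)$, so $b(T)=(4,5)$ and the smallest bottom entry lies in column $1$; the Ranking Algorithm gives ranks $(0,1,2)$ and $(1,2)$; the Walking Algorithm produces $\sigma(D)=1,5,3,4,2$. The indices $1,4,2$ of the removed column occupy positions $1,4,5$ of $\sigma(D)$, so they are not a single contiguous excursion. Moreover the full walk visits $5$ before $3$, whereas the walk on the reduced, re-ranked one-column tableau (ranks $0,1$) visits $3$ before $5$, so the two walks are not in lockstep either. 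This is precisely the phenomenon the paper warns about when it says the $\k$ case is ``quite different'' from the $\k^{\pm}$ cases: deleting a column changes the ranks assigned to later columns, and because ties between equal ranks are broken globally by the ``largest unmarked index'' rule, the reduced walk can make genuinely different choices. So the analogy with the $\k^{\pm}$ column-removal argument does not transfer.

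What the paper does instead, in both of its proofs, is a short argument you never invoke: suppose the walk halts early; since by your part one the halt occurs while targeting rank $0$, every rank-$0$ index has been written, so the smallest unwritten index $b$ satisfies $b\ge 2$ and $b-1$ is written. By Lemma \ref{1-1.4}, $r(b)-r(b-1)\in\{0,1\}$. If $r(b)=r(b-1)$, then at the moment the walk wrote $b-1$ it should have preferred the larger unmarked index $b$ of the same rank, a contradiction. If $r(b)=r(b-1)+1$, then $b$ is not in row $1$; by rank-monotonicity every index of rank $r(b)-1$ is smaller than $b$, hence written, and then your own counting identity (equivalently, the in-degree count at the vertex $r(b)-1$ of $G_R$) shows all moves into rank $r(b)-1$ have been used, including the down-move out of $b$ itself, so $b$ was written --- again a contradiction. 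Your part one plus Lemma \ref{1-1.4} are exactly the tools needed; replacing your induction on columns with this smallest-unwritten-index argument turns your sketch into a complete proof.
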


We will give two proofs of this lemma in the next section. The first one only considers the equal parameter case $T \in \TAU_n^k$. One will see that the idea extends
but the notation becomes awkward for $T\in \TAU_{\k}$. The second one uses standard terminology from graph theory.

\begin{proof}[Proof of Theorem \ref{t-I.3}]
By Lemma \ref{l-length-sigma(D)}, we may write $\sigma(D)= a_1a_2\cdots a_{|\k|+n}$.
Assume the corresponding ranks are $r_1,r_2,\dots, r_{|\k|+n}$. Define
$\oD$ to be the SW-sequence obtained from $\sigma(D)$ by replacing each top row index $t_i$  with an $S^{k_i}$ and every other index by a $W$.
We need to show that $\Phi(\oD)=D$. This follows from the following two facts.

\begin{enumerate}
  \item[Fact 1:] The rank sequence of $\oD$ is exactly $(r_1,r_2,\dots, r_{|\k|+n})$. This is consistent with the rule $r_{j+1}=r_j+k$ if $a_j$ is equal to $t_i$
and $r_{j+1}=r_j-1$ if $a_j$ is below row $1$.

\item[Fact 2:] The sweep order is from right to left when two ranks are equal. This corresponds to that for equal rank entries, their corresponding indices are increasing from right to left in $\sigma (D)$.
\qedhere
\end{enumerate}

\end{proof}

\section{Proof of Lemma \ref{l-length-sigma(D)}\label{proof}}

\subsection{First proof}
We only consider the equal parameter case $T \in \TAU_n^k$.
We need the following notation. One will see that the notation becomes awkward for $T\in \TAU_\k$.

\def\barwedgex{\wedge}
\def\veebarx{\vee}
Let $D$ be a Dyck path in $\CD_{kn,n}$ with index-rank tableau $(T(D), R(D))$. For any integer $r$, we denote by $n(r)$ the number of $r$'s appearing in $R(D)$,
so $n(r)=0$ if $r<0$. We also denote by $n^-(r)$ and $n^\wedge(r)$ the  number of $r$'s in the top row and the number of $r$'s below the top row respectively.
Similarly, we denote by $n_-(r)$ and $n_\vee(r)$ the number of $r$'s in the bottom row and the number of $r$'s above the bottom row respectively. Then the following three equalities are clear.
\begin{align}\label{e-II.3}
n(r)=n_-(r)+n_\vee(r),\qquad  n_-(r)=n^-(r-k), \qquad n_\vee(r)=n^\wedge(r+1).
\end{align}


\begin{lem}
  \label{1-1.1}
Let $D\in \CD_{m,n}$ be a Dyck path, where $m=kn$. Then we have the following basic properties.
\begin{enumerate}
\item The ranks of $D$ have the common divisor $n$. 

\item For a word $\om\in S^nW^m$ and $1\le i\le m+n$ denote by
$a_i(\om)$ and $b_i(\om)$,  the numbers of  ``$W$'' and ``$S$''
respectively that occur  in the first $i$ letters of $\om$.
It is important to notice that we will have $\om=\SW(D)$ for some
$D\in {\cal D}_{m,n}$  if and only if
 $$ b_i(\om)m-a_i(\om)n\ge 0 \qquad
\hbox{for all $1\le i\le m+n$}.
$$

\item If a rank $r$ appears in $R(D)$, it will appear at most $n$ times and
$$n(r)=n^-(r-k)+n^\barwedgex(r+1).
$$
In particular, we have $$n(0)=n^\barwedgex(1).
$$
\end{enumerate}
\end{lem}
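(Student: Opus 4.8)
The plan is to handle the three parts separately, in increasing order of difficulty, noting that only Part~3 has real content while Parts~1 and~2 are essentially definitional.

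For Part~1, I would argue directly from Model~1: the rank starts at $0$ and changes by $+m=+kn$ on each North step and by $-n$ on each East step. Since both increments are divisible by $n$, every rank is a multiple of $n$. For Part~2, I would first observe that the recursive rule defining the rank sequence makes the rank reached after the first $i$ letters of $\om$ equal to $b_i(\om)m-a_i(\om)n$. A word in $S^nW^m$ is the SW-word of a genuine path from $(0,0)$ to $(m,n)$, and this path stays weakly above the diagonal of slope $n/m$ exactly when every such partial rank is nonnegative; this is the asserted inequality. Both arguments are short and amount to bookkeeping.

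The heart is Part~3, which I would reduce entirely to the three equalities in $(\ref{e-II.3})$ together with the nonnegativity of ranks. First, the ``at most $n$ times'' bound: the Ranking Algorithm (Algorithm~\ref{al-Ranking Algorithm}) fills each of the $n$ columns with the consecutive ranks $a,a+1,\dots,a+k$ from top to bottom, so any fixed value occurs at most once in a column and hence at most $n$ times in $R(D)$. Next, chaining $(\ref{e-II.3})$ gives
$$ n(r)=n_-(r)+n_\vee(r)=n^-(r-k)+n^\wedge(r+1), $$
which is the desired identity. Finally, putting $r=0$ and using $n^-(-k)=0$ (no negative rank occurs, by Part~1) yields $n(0)=n^\wedge(1)$.

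The step that genuinely needs care is the justification of $(\ref{e-II.3})$, which the text calls ``clear'' but which rests on the precise columnwise structure produced by the Ranking Algorithm: the first equality partitions the boxes of each column into its bottom box and the boxes above it; the second uses that the bottom rank of every column is exactly $k$ more than its top rank, giving a bijection between columns of bottom rank $r$ and columns of top rank $r-k$; the third matches each box of rank $s$ above the bottom row with the box of rank $s+1$ directly below it, which necessarily lies below the top row. I expect this columnwise consecutiveness to be the crux of the argument; once it is in hand, everything else is immediate.
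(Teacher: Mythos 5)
Your proposal is correct and follows essentially the same route as the paper: part (1) via divisibility of the two step increments, part (2) by identifying the partial quantity $b_i(\om)m-a_i(\om)n$ with the condition of staying weakly above the diagonal, and part (3) by the strict increase of ranks down each column (giving the bound $n$) together with chaining the three equalities in \eqref{e-II.3}. The only difference is that you explicitly justify \eqref{e-II.3} via the columnwise-consecutive structure produced by the Ranking Algorithm and note $n^-(-k)=0$ for the $r=0$ specialization, details the paper simply declares clear.
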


\begin{proof}
\begin{enumerate}
\item Because we start with assigning $0$ to the south end of the first North step, this done we add an $m=kn$ as we go North and subtract an $n$ as we go East, all the ranks are divisible by $n$.

\item In fact after $a_i(\om)$ letters $W$ and $b_i(\om)$ letters $S$, the corresponding path has reached a lattice point of coordinates $\big(a_i(\om),b_i(\om)\big)$, this point is above the diagonal $(0,0)\RA (m,n)$ if and only if
$$
{b_i(\om)\over a_i(\om)}\ge{n\over m}.
$$
\item Since every  column of $R(D)$ is strictly increasing, any rank $r$ may appear at most $n$ times. The equality
$n(r)=n^-(r-k)+n^\barwedgex(r+1)$ follows from equation \eqref{e-II.3}.
\end{enumerate}
\end{proof}

\begin{lem}
  \label{1-1.4m}
In the equal parameter case,
Algorithm \ref{al-Walking Algorithm} terminates only after we mark the smallest rank $1$ entry and write down its index.
\end{lem}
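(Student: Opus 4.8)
The plan is to reinterpret the Walking Algorithm (Algorithm \ref{al-Walking Algorithm}) as a token moving through \emph{rank values} rather than through boxes, and then to run an in/out balance count that is supplied exactly by Lemma \ref{1-1.1}(3). First I would record the elementary reformulation in the equal parameter case $T\in\TAU_n^k$: if the token currently sits on a box of rank $r$, then the next box it marks is the \emph{largest unmarked} box whose rank is $r+k$ when the current box lies in row $1$, and whose rank is $r-1$ otherwise (this is just what "go to the bottom'' and "go up one box'' read off, since a column carries consecutive ranks). Because each step moves to a previously \emph{unmarked} box and the tableau is finite, the walk is automatically finite; it halts precisely at the first moment it "targets'' a rank for which no unmarked box remains. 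So the whole lemma reduces to locating this halting rank and the last box marked.

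The heart of the argument is to show the walk can halt only while targeting rank $0$. I would bound, for each fixed rank $r\ge 1$, the number $N_r$ of times the walk targets rank $r$. Every such target is immediately preceded by marking a single box that is either a row-$1$ box of rank $r-k$ or a below-top-row box of rank $r+1$, and distinct targets are preceded by distinct boxes; hence $N_r \le n^-(r-k)+n^{\wedge}(r+1)=n(r)$ by Lemma \ref{1-1.1}(3). If the walk halted while targeting some $r\ge 1$, then all $n(r)$ rank-$r$ boxes would already be marked by successful targets, forcing $N_r\ge n(r)+1$, a contradiction. Therefore the halt must occur at rank $0$. At $r=0$ there is no contradiction: the initial step targets rank $0$ without being preceded by any box, supplying exactly one extra target beyond the $n(0)$ available boxes, so $N_0=n(0)+1$ and this single unbalanced source is what forces termination.

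It then remains to identify the final box as the smallest rank-$1$ entry. A rank-$0$ target is produced only by marking a below-top-row rank-$1$ box, so the last box marked is of this type; and since $N_0=1+(\text{number of below-top-row rank-}1\text{ boxes marked})$ together with $N_0=n(0)+1=n^{\wedge}(1)+1$ (using $n(0)=n^{\wedge}(1)$), \emph{every} below-top-row rank-$1$ box gets marked. In particular the smallest rank-$1$ entry $x$ is marked; note $x$ cannot sit in row $1$, because by the Ranking Algorithm (Algorithm \ref{al-Ranking Algorithm}) the top index $A+1$ of a top-rank-$1$ column has $A$ of rank $1$ with $A<A+1$, so a row-$1$ rank-$1$ box is never smallest. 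Now let $y$ be the genuinely last box marked; both $x$ and $y$ are reached as the \emph{largest} unmarked rank-$1$ entry at their turn, so each was marked only after all larger rank-$1$ entries. If $x\ne y$ then $y>x$, which would force $y$ to be marked before $x$ while $y$ is also the last box marked, a contradiction; hence $x=y$. Thus the walk stops exactly after marking the smallest rank-$1$ entry. The main obstacle, and the step I would treat most carefully, is the conservation count of the middle paragraph: recognizing that Lemma \ref{1-1.1}(3) is exactly the in-degree $=$ out-degree balance at every positive rank (so the token can never strand itself and overflow there), while the lone unbalanced source at rank $0$, namely the initial step, is precisely what both forces and locates termination.
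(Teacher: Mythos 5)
Your proof is correct, and it runs on the same engine as the paper's: the identity $n(r)=n^-(r-k)+n^{\wedge}(r+1)$ of Lemma \ref{1-1.1}(3), read as an in/out balance, turned into a counting contradiction. The packaging, however, is genuinely different and worth comparing. The paper argues by contradiction on the \emph{last marked box}, splitting into three cases (last box in row $1$; last box below row $1$ with rank $r>1$; last box a non-smallest rank-$1$ entry), and in each case bounds the number of already-marked entries of the targeted rank by $n(\cdot)-1$. Your version replaces this case analysis by a single per-rank count of \emph{targets} $N_r$, which merges the paper's first two cases into one statement (halting is impossible at any rank $r\ge 1$ because $N_r\le n(r)$ while halting would force $N_r\ge n(r)+1$), and then identifies the final box \emph{positively} rather than by contradiction: the exact count $N_0=n(0)+1=n^{\wedge}(1)+1$ shows every below-top-row rank-$1$ box gets marked, and the largest-unmarked-first rule makes the smallest one the last. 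A further merit is that you make explicit two facts the paper's third case uses only implicitly in its ``$1+n^{\wedge}(1)-2$'' count: that the smallest rank-$1$ entry always lies below row $1$ (your Ranking Algorithm argument via the top index $A+1$ inheriting the rank of $A$), and that it is still unmarked at the putative stopping moment (your monotonicity of marks within a rank). What the paper's route buys is brevity and a direct match with the hypotheses of Lemma \ref{1-1.4m}; what yours buys is a cleaner conservation-law structure that is closer in spirit to the paper's second, graph-theoretic proof of Lemma \ref{l-length-sigma(D)} (the balanced digraph $G_R$), so it would generalize more transparently beyond the equal parameter case.
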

\begin{proof}
Suppose the algorithm terminates after we mark a rank $r$ entry and write down its index $a$.

If $a$ is in row $1$, then all rank $r+k$ entries have been marked. But to mark a rank
$r+k$ entry, we can either go from
a rank $r$ whose index is in row 1, or go from a rank $r+k+1$ entry whose index is not in row 1. Thus the number of marked
rank $r+k$ entry is at most
$n^-(r)+n^\barwedgex(r+k+1)-1$, which is equal to $n(r+k)-1$ by the equality in Lemma \ref{1-1.1}(3). This is a contradiction.

If $a$ is not in row $1$, then $r\geq 1$ and there is no unmarked rank $r-1$ for the walking algorithm to terminate.
We have the following two cases.

\begin{enumerate}
\item If $r>1$, then we show the contradiction that there is at least one unmarked rank $r-1$ entry so that the algorithm will not terminate. Or equivalently the number of
marked $r-1$ is at most $n(r-1)-1$. According to the walking algorithm, to mark a rank $r-1$ entry, we can either go from
a rank $r-1-k$ whose index is in the top row, or go from a rank $r$ entry whose index is not in the top row.
Thus the number of marked rank $r-1$ entry is at most
$n^-(r-1-k)+n^\barwedgex(r)-1$, which is equal to $n(r-1)-1$ by the equality in Lemma \ref{1-1.1}(3).

\item If $r=1$ and we stopped at the non-smallest rank $1$ entry, then we show the contradiction that there is at least one unmarked rank $0$ entry so that the algorithm will not terminate.
Or equivalently the number of marked rank $0$ entries is at most $n(0)-1$. The reason is similar. According to the walking algorithm, to mark a rank $0$ entry, we can either mark the largest rank $0$ entry at the first step,
or go from a rank $1$ entry whose index is not in the top row. Since we stopped at the non-smallest rank $1$ entry, the number of marked rank $0$ entry
is at most
$1+n^\barwedgex(1)-2=n^\barwedgex(1)-1$, which is equal to $n(0)-1$ by the equality in Lemma \ref{1-1.1}(3). \qedhere
\end{enumerate}

\end{proof}

\begin{proof}[First Proof of Lemma \ref{l-length-sigma(D)}]
Now we are ready to show that Algorithm \ref{al-Walking Algorithm} terminates after we write down all of the $m+n$ indices.
Assume to the contrary that the algorithm terminates after we write down only $p<m+n$ indices. Denote the resulting $\sigma(D)$ by $a_{i_1},a_{i_2},a_{i_3},\ldots,a_{i_p}$, with corresponding ranks $r_{i_1},r_{i_2},r_{i_3},\ldots, r_{i_p}$. Then $r_{i_1}=0, r_{i_2}=k$, and $r_{i_p}=1$ is the smallest rank $1$ by Lemma \ref{1-1.4m}. Note that all rank $0$ indices have been written. In particularly, index $1$ is written.
Moreover, for each $j$ with $1\leq j< p$ either $r_{i_j}+k=r_{i_{j+1}}$ or $r_{i_j}-1=r_{i_{j+1}}$.

Arrange the unwritten indices in increasing order as $1<a_{i_{p+1}}< a_{i_{p+2}}<\cdots <a_{i_{m+n}}$, and denote their corresponding ranks by $r_{i_{p+1}}\leq r_{i_{p+2}}\leq r_{i_{p+3}}\leq\ldots\leq r_{i_{m+n}}$.
By our choice $a_{i_{p+1}}-1$ is a written index, so assume $a_{i_j}=a_{i_{p+1}}-1$ for some $j\leq p$. Now we have two cases, both leading to contradictions.

\begin{enumerate}
\item If $r_{i_{p+1}}=r_{i_j}$, then the walking algorithm would have prefered to writting index $a_{i_{p+1}}$ than $a_{i_j}$ when visiting rank $r_{i_j}$.
\item If $r_{i_j}<r_{i_{p+1}}$, then $a_{i_{p+1}}$ is not in row $1$. By definition of $a_{i_{p+1}}$ and the increasing ranks of the indices in Lemma \ref{1-1.4}, all rank $\alpha=r_{i_j}$ entries have been marked. But to mark a rank $\alpha$ entry, we can either go from a rank $\alpha-k$ in top row, or from a rank $\alpha+1$ below top row (including the index $a_{i_{p+1}}$ with $r_{i_{p+1}}$). These implies that
    there are at most $n^{}(\alpha-k)+n^{}(\alpha+1)-1=n(\alpha)-1$ (by \ref{1-1.1}(3)) rank $\alpha$ entries have been marked, a contradiction. \qedhere

\end{enumerate}

\end{proof}

\subsection{Second proof}
Our second proof assumes basic knowledge of graph theory.

Firstly we construct a digraph $G_R$ from the rank tableau $R=R(D)$ as follows. The vertices of $G_R$ are the ranks appearing in $R$. Each directed edge of $G$ is associated to an index of $T$:
i) if the index is $t_i$ and has rank $a$, then the directed edge is $a\to a+k_i$; ii) if the box is not in row $1$ and has rank $b$, then the directed edge
is $b\to b-1$.

Each rank $a$ of $G_R$ is associated with a set $S(a)$ consisting of all indices with rank $a$. Denote by $F(T)=\{t_1,\dots, t_n\}$ the set of fist row indices of $T$ arranged increasingly.

\begin{lem}
The digraph $G_R$ of a rank tableau $R=R(D)$ is balanced. That is, each rank $a$ of $G$ has in-degree equal to out-degree, and equal to $|S(a)|$.
\end{lem}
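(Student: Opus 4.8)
The plan is to compute the out-degree and the in-degree of an arbitrary vertex $a$ separately and show that each equals $|S(a)|$. The out-degree is immediate from the construction: every index of $T$ contributes exactly one outgoing edge (a type (i) edge if it is a top index $t_i$, a type (ii) edge otherwise), and in both cases the tail of that edge is the rank of the index itself. Hence the edges leaving $a$ are in bijection with the indices of rank $a$, so $\operatorname{out-deg}(a)=|S(a)|$. As a consistency check, summing over all $a$ recovers the total edge count $n+|\k|$, the number of indices.

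For the in-degree I would first record the structural output of the Ranking Algorithm (Algorithm \ref{al-Ranking Algorithm}): the ranks in the $i$-th column are the consecutive integers $\rho_i,\rho_i+1,\dots,\rho_i+k_i$ read top to bottom, where $\rho_i$ is the rank of the top index $t_i$; in particular the bottom entry of column $i$ has rank $\rho_i+k_i$. I carry over the counting notation $n(a),\,n^-(a),\,n^\wedge(a),\,n_-(a),\,n_\vee(a)$ from the first proof, now reading ``bottom row'' as the $(k_i+1)$-st entry of each column. Two of the three identities of \eqref{e-II.3} are per-column statements that survive verbatim in the $\k$ setting: the partition $n(a)=n_-(a)+n_\vee(a)$, and the within-column shift $n_\vee(a)=n^\wedge(a+1)$ (in column $i$ the entry of rank $a$ lying in rows $1,\dots,k_i$ sits directly above the entry of rank $a+1$ in rows $2,\dots,k_i+1$, giving a bijection independent of the value $k_i$).

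Now I classify the edges entering $a$ by type. A type (i) edge comes from a top index $t_i$ and runs $\rho_i\to\rho_i+k_i$; since $\rho_i+k_i$ is precisely the rank of the bottom entry of column $i$, such an edge terminates at $a$ exactly when column $i$ has bottom rank $a$, so the type (i) in-edges contribute $n_-(a)$. A type (ii) edge runs $b\to b-1$ from a non-row-$1$ index of rank $b$, hence terminates at $a$ iff $b=a+1$, contributing $n^\wedge(a+1)$. Adding the two contributions gives $\operatorname{in-deg}(a)=n_-(a)+n^\wedge(a+1)=n_-(a)+n_\vee(a)=n(a)=|S(a)|$, matching the out-degree and proving balance.

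The main obstacle is precisely the point at which the general $\k$ case parts ways with the equal-parameter argument: the identity $n_-(a)=n^-(a-k)$ used in \eqref{e-II.3} has no meaningful analogue when the $k_i$ differ, so one cannot route the type (i) count through the top row as before. The move that removes this obstacle is the observation that each top-index edge $\rho_i\to\rho_i+k_i$ lands on the bottom rank of \emph{its own} column, which lets me identify type (i) in-edges with bottom-row entries directly. The only point requiring care is to confirm this is a genuine bijection --- one column yields one bottom entry and exactly one type (i) edge --- which is guaranteed by the consecutive-rank structure of each column supplied by the Ranking Algorithm.
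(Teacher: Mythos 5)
Your proof is correct, but it packages the argument differently from the paper. The paper's proof is a cycle decomposition: restrict $G_R$ to the $i$-th column and observe that the resulting subgraph is a single directed cycle $r\to r+k_i\to r+k_i-1\to\cdots\to r+1\to r$ (where $r$ is the rank of $t_i$) --- the top-index edge closes the cycle precisely because it lands on the bottom rank of its own column, and the non-top indices supply the descending chain. Balance is then immediate, since $G_R$ is the edge-disjoint union of these $n$ cycles and each vertex $a$ lies on exactly $|S(a)|$ of them (ranks within a column are distinct consecutive integers). Your argument is the aggregated version of the same fact: you compute out-degree globally (one out-edge per index) and in-degree globally (type (i) edges counted by $n_-(a)$, type (ii) edges by $n^\wedge(a+1)$), then reconcile the two through the per-column identities $n(a)=n_-(a)+n_\vee(a)$ and $n_\vee(a)=n^\wedge(a+1)$. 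The decisive observation is identical in both proofs --- column $i$'s type (i) edge terminates at that same column's bottom rank --- but what each buys differs. The paper's formulation is shorter and makes the graph-theoretic picture transparent: the Walking Algorithm G is visibly a tour through a union of directed cycles, which is exactly the structure exploited in the second proof of Lemma \ref{l-length-sigma(D)}. Your formulation yields explicit degree formulas and, usefully, pinpoints which of the equal-parameter identities in \eqref{e-II.3} fail for general $\k$ (namely $n_-(a)=n^-(a-k)$, whose left side can no longer be routed through the top row) and which survive verbatim --- a clarification the paper leaves implicit.
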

\begin{proof}
Let $C_i$ be the digraph obtained by restricting $G$ to the $i$-th column of $R$. Then $C_i$ is clearly a directed cycle $r\to r+k_i\to  \cdots \to r+2\to r+1\to r$, where
$r$ is the rank in row $1$. The in-degree and out-degree of  a rank $a$ are both $1$ if $a$ appears in $C_i$ and are $0$ if otherwise.
The lemma then follows since $G$ is the union of $C_i$ for $i=1,2,\dots, n$.
\end{proof}

 The walking algorithm can be restated as a modified Eulerian tour as follows.

\begin{algo}[Walking Algorithm G]
\label{al-Walking Algorithm2}

\noindent
Input: The digraph $G_R$ together with $S(a)$ associated to every vertex $a$, and $F(T)$ as above.

\noindent
Output: A permutation $\sigma(D)$ through walking on $G_R$.

\begin{enumerate}
\item In $G_R$, go to rank $0$, mark the largest index in $S(0)$ and write down it;
\item Repeat the following steps.
\begin{enumerate}
  \item Suppose we are at rank $r$ and has just marked $t_i$ in $F(t)$. If there is no unmarked index in $S(r+k_i)$, then terminates. Otherwise,
   go to rank $r+k_i$, mark the largest unmarked index in $S(r+k_i)$ and  write down it;
\item
 Suppose we are at rank $r$ and has just marked an index not in $F(t)$. If there is no unmarked index in $S(r-1)$, then terminates. Otherwise, go to rank $r-1$, mark the largest unmarked index in $S(r-1)$ and write down it.
\end{enumerate}
\end{enumerate}

\end{algo}

\begin{proof}[Second Proof of Lemma \ref{l-length-sigma(D)}]
Assume to the contrary that the algorithm terminates after we write down $\sigma(D)=a_1a_2\cdots a_p$ for $p<|\k|+n$. Then $r(a_1)=0$.

Firstly we claim that $r(a_p)=1$, and the algorithm terminates when we are trying to go to rank $0$.
This is simply due to the following observations:
i) the in-degree and out-degree of rank $r$ is $|S(r)|$;
ii) every time when marking an index in $S(r)$, we used one in-degree and one out-degree (including the attempt to go out);
iii) the assumption that every index of $S(r)$ has been marked implies that all the in-degree and out-degree has been used,
and hence there is no more edges in $G_R$ directed to rank $r$. The only exceptional case is when $r=0$, because we starting by marking the largest rank $0$ index
without using its in-degree.

Now $C=r(a_1)\to r(a_2) \to \cdots r(a_p) \to r(a_1)$ is a directed cycle contained in $G_R$ as a subgraph.
Let $b$ be the smallest unwritten index. Then $b-1=a_i\ge 1$ for some $i\le p$, since all rank $0$ indices have been written. By Lemma \ref{1-1.4} $r(b)-r(a_i)$ is either $0$ or $1$. Both cases lead to contradictions.
i) If $r(b)= r(a_i)$, then the walking algorithm would have preferred to writing index $b$ than $a_{i}$ when visiting rank $r(b)$.
ii) If $r(b)=r(a_i)+1$, then $b$ is not in row $1$ by the ranking algorithm, and hence will be associated to a directed edge $e=r(b)\to r(b)-1$ in $G_R$.
Now observe that $S(r(b)-1)$ are all contained $\sigma(D)$. This implies that the directed cycle $C$
contains all directed edges into $r(b)-1$, in particular the edge $e$. Then the walking algorithm must have written $b$ already, which contradicts
the choice of $b$.
\end{proof}







\begin{thebibliography}{99}
\bibitem{dinv-def}
Drew Armstrong, Nicholas A. Loehr, and Gregory S. Warrington, \newblock Rational parking functions and Catalan
numbers, \newblock {\em Annals Combin.}, 20(1):21--58, 2016.

\bibitem{sweepmap}
 D. Armstrong, N. A. Loehr, and G. S. Warrington, Sweep maps: A continuous family of sorting
algorithms, Adv. Math. 284 (2015), 159--185.

\bibitem{qt-Catalan}
A. Garsia and M. Haiman, A remarkable $q, t$-Catalan sequence and $q$-Lagrange inversion, J. Algebraic
Combinatorics 5 (1996), 191--244.

\bibitem{dinv-area}
A. Garsia and G. Xin, Dinv and Area, Electron. J. Combin., 24 (1) (2017), P1.64.

\bibitem{Rational-Invert}
A. Garsia and Guoce Xin, \newblock  Inverting the rational sweep map, \newblock{\em J. of Combin.}, to appear, arXiv:{1602.02346}.

\bibitem{Fuss-case}
Adriano M. Garsia and G. Xin, On the sweep map for fuss rational Dyck paths, preprint, arXiv:1807.07458.

\bibitem{Gorsky-Mazin}
 E. Gorsky and M. Mazin, Compactified Jacobians and $q,t$-Catalan Numbers, J. Combin. Theory Ser. A, 120 (2013), 49--63.

\bibitem{Gorsky-Mazin2}
 E. Gorsky and  M. Mazin, Compactified Jacobians and $q,t$-Catalan Numbers II, J. Algebraic Combin., 39 (2014), 153--186.

\bibitem{Hag-book08}
J. Haglund, The $q, t$-Catalan numbers and the space of diagonal harmonics, with an appendix on the
combinatorics of Macdonald polynomials, AMS University Lecture Series, 2008.

\bibitem{Loehr-higher-qtCatalan}
Nicholas A. Loehr, Conjectured statistics for the higher $q,t$-Catalan sequences, Electron. J. Combin., 12 (2005) research paper R9; 54 pages (electronic).


\bibitem{Nathan}
H. Thomas and N. Williams, \newblock Sweepping up zeta,  Sel. Math. New Ser., 24 (2018), 2003--2034.


\end{thebibliography}
\end{document}